\newtheorem{lemma}{Lemma}[section]
\newtheorem{proposition}[lemma]{Proposition}
\newtheorem{theorem}[lemma]{Theorem}
\theoremstyle{definition}
\newtheorem{remark}[lemma]{Remark}
\newtheorem{example}[lemma]{Example}
\numberwithin{equation}{section}
\newcommand{\R}{\mathbb{R}}
\newcommand{\mc}[1]{\mathcal{#1}}
\newcommand{\dive}{{\rm div}}
\newcommand{\sgn}{{\rm sgn}}
\newcommand{\nequiv}{\not \equiv}
\def\eps{\varepsilon}
\def\pabs#1{\left|{#1}\right|}
\def\norm#1{\|#1\|}
\renewenvironment{proof}[1][\proofname]{\medskip \noindent {\bfseries #1. }}{\qed \bigskip}
\title[Concavity for quasilinear equations and optimality remarks]{Concavity properties for quasilinear 
equations \\ and optimality remarks}
\author[N.\ M.\ Almousa]{Nouf M. Almousa \orcidlink{0000-0001-6412-1993}}
\author[J.\ Assettini]{Jacopo Assettini}
\author[M.\ Gallo]{Marco Gallo \orcidlink{0000-0002-3141-9598}}
\author[M.\ Squassina]{Marco Squassina \orcidlink{0000-0003-0858-4648}}
\address[N.\ M.\ Almousa, M.\ Squassina]{\newline\indent College of Science
	\newline\indent
	Princess Nourah Bint Abdul Rahman University
	\newline\indent
	Saudi Arabia, Riyadh, PO Box 84428}
\email{\href{mailto:nmalmousa@pnu.edu.sa}{nmalmousa@pnu.edu.sa}}
\email{\href{mailto:marsquassina@pnu.edu.sa}{marsquassina@pnu.edu.sa}}
\address[J.\ Assettini, M.\ Gallo, M.\ Squassina]{\newline\indent Dipartimento di Matematica e Fisica
	\newline\indent
	Università Cattolica del Sacro Cuore
	\newline\indent
	Italy, Brescia, BS, Via della Garzetta 48, 25133}
\email{\href{mailto:jacopo.assettini01@icatt.it}{jacopo.assettini01@icatt.it}} 
\email{\href{mailto:marco.gallo1@unicatt.it}{marco.gallo1@unicatt.it}}
\email{\href{mailto:marco.squassina@unicatt.it}{marco.squassina@unicatt.it}}
\thanks{The first and fourth authors are supported by Princess Nourah bint Abdulrahman University Researchers Supporting Project number (PNURSP-HC2023/3), Princess Nourah bint Abdulrahman University, Saudi Arabia.
The third and fourth authors are members of {Gruppo Nazionale per l'Analisi Ma\-te\-ma\-ti\-ca, la Probabilit\`a e le loro Applicazioni} (GNAMPA). 
	The third author is supported by PRIN 2017JPCAPN ``Qualitative and quantitative aspects of nonlinear PDEs''.
		}
\subjclass[2010]{%
Concavity of solutions, 
Quasi-linear elliptic equations.
}
\keywords{%
26B25, 
35B99, 
35E10, 
35J60, 
35J62. 
}
\begin{document}

\begin{abstract}
In this paper we study quasiconcavity properties of solutions of Dirichlet problems related to modified nonlinear Schrödinger equations of the type
$$-{\rm div}\big(a(u) \nabla u\big) + \frac{a'(u)}{2} |\nabla u|^2 = f(u) \quad \hbox{in $\Omega$},$$
where $\Omega$ is a convex bounded domain of $\mathbb{R}^N$.
In particular, we search for a function $\varphi:\mathbb{R} \to \mathbb{R}$, modeled on $f\in C^1$ and $a\in C^1$, which makes $\varphi(u)$ concave. Moreover, we discuss the optimality of the conditions assumed on the source.
\end{abstract}

\maketitle

\begin{center}
	\begin{minipage}{10cm}
		\small
		\tableofcontents
	\end{minipage}
\end{center}

\medskip

\section{Introduction}



A natural question in studying differential equations is whether the solutions of a PDE, set in a certain domain, inherit or not the geometrical properties of the domain itself. 
Starting from \cite{GNN79} 
a huge amount of work has been developed in achieving symmetry of solutions 
from the symmetry (and convexity) of the domain, especially exploiting the Alexandroff-Serrin moving plane method. If the domain is a ball, for example, the solution 
reveals to be radially symmetric and decreasing: in this case all the level sets of the solution are balls and, thus, convex sets.

When the symmetry of the domain is dropped, one may wonder if the solutions still inherit convexity (or star-shapedness) of the domain: 
this question has been addressed starting from the pioneering papers \cite{PS51, 
ML71, BL76, Lew77}, 
and further developed in the subsequent years. 
We highlight that, contrary to star-shapedness, weaker properties of the domain (like simple connection) are generally not inherited by the level sets of the solutions (see e.g. \cite{Sto64}).

Convexity properties of solutions are actually a good information which arise, for example, 
in the study of the free boundaries and the coincidence sets in obstacle problems \cite{Kin78, CS82}, 
in minimal surfaces and prescribed Gauss curvature problems \cite{GuMa05}, 
as well as in optimal control (ensuring, for example, that the trajectories are contained in the domain);
they further give information on the gap between eigenvalues \cite{SWYY},
and on the critical points and the uniqueness of solutions. 
These properties additionally appear in several applications, such as 
plasma confinement \cite{Ack81}, 
capillary surfaces \cite{Kor83Ca}, 
capacitory potentials \cite{Lew77}, 
porous solid combustion \cite{KS87}, 
economy \cite{SZ81}, 
and elasto-plastic deformation of cylinders \cite{ML71}. 

When searching for concavity properties of solutions, one can easily observe that, generally, concavity itself is a too strong goal: while for the torsion problem the concavity may be obtained, for example, for suitable perturbations of ellipsoids \cite{HNST18} (see also \cite{Kos87}), 
in \cite[Remark 2.7]{Kor83Co} 
(see also \cite{Lind94})
it is shown that the first eigenfunction 
of the Laplacian is 
never concave, whatever the bounded regular set is (see \cite[Remark 3.4]{Kaw85R} and Remark \ref{rem_never_concav} for a more general framework).
The same holds true anyway also for the torsion problem, if for example the boundary has some flat zone \cite[Theorem 18]{Ken84Th}. 


Generally, one may search instead for a strictly increasing function $\varphi$ such that the composition $\varphi(u)$ with the solution $u$ is actually concave. This property is generally stronger than the \emph{quasiconcavity}, which requires that all the level sets of $u$ are concave; both the properties have been extensively investigated in literature.


\smallskip

In the present paper we study concavity properties of solutions to the following quasilinear Dirichlet boundary problem 
\begin{equation}
	\label{eq_main_intr}
\begin{cases}
-\dive\big(a(u) \nabla u\big) + \frac{a'(u)}{2} |\nabla u|^2 = f(u) & \Omega,
\\
u>0 & \Omega,
\\
u=0 & \partial \Omega
\end{cases}
\end{equation}
related to the so called \emph{modified nonlinear Schrödinger equation} (MNLS). In particular, we investigate how the weight $a \in C^1((0,+\infty))$ and the source $f\in C^1((0,+\infty))$ influence the ``concaving'' function $\varphi:\R \to \R$. 

\smallskip

Focusing on the semilinear case $a\equiv 1$, in the seminal paper \cite{ML71} 
it is shown that the solutions of the torsion problem (i.e. 
$f \equiv 1$) are such that $\varphi(u)=\sqrt{u}$ is concave; moreover, the authors in \cite{BL76} 
show that the eigenfunctions of the first eigenvalue of Laplacian (i.e. $f(t) = \lambda_1 t$) satisfy $\varphi(u)=\log(u)$ concave. 
More generally, it has been shown \cite{Ken85} that solutions of $-\Delta u = u^q$, $q \in (0,1)$, verify $\varphi(u)=u^{\frac{1-q}{2}}$ concave: 
in all the three cases we see that (up to multiplicative and additive constants)
$$\varphi(t) \equiv \int_1^t \frac{1}{\sqrt{F(s)}} ds$$
where $F(s):= \int_0^s f(\tau) d \tau$ is the antiderivative of the source $f$.

Several papers have then further investigated concavity properties of functions, both on convex domains and convex annuli, both for semilinear and quasilinear equations \cite{Lio81, 
CS82, Kor83Co, Kaw85W, Ken85, 
Sak87, 
Lin94, 
Gom07} (see also \cite{Kaw87
} for some results on complements of bounded sets): 
the proofs involve different techniques, 
such as maximum principles for suitable concavity functions, parabolic and probabilistic methods, convex rearrangements.
We refer to \cite{Ken84Th, Kaw85R, Kaw86G, KL87}
for some surveys on the topic up to the `80s.

We further observe that these results have been generalized to 
parabolic frameworks (in the sense that the quasiconcavity of the initial datum is conserved in time, see 
\cite{DK93} 
and references therein) 
singularly perturbed equations \cite{GrMo03}, 
Hessian equations 
(see \cite{Ye13} 
and references therein),
manifolds \cite{Shi89},
and many other frameworks. 
See also \cite{Ack81, Tal81, LS90Ip,LV08} 
for results about existence of a single (quasi)concave solution (mainly by variational methods through constraints which naturally include quasiconcavity), 
and \cite{BS20} 
for results about concavity up to an error.

Most of the cited papers, anyway, study quasiconcavity of solutions, or give assumptions on the source $f$ in order to have a suitable power $u^{\gamma}$, $\gamma \in (0,1]$, of the solution $u$, to be concave (or $\log$-concave, formally $\gamma=0$). Recently, Borrelli, Mosconi and the fourth author \cite[Theorem 1.2]{BMS22} proved instead the following result, which shows concavity for a suitable $\varphi$ \emph{directly connected} to the source $f$ and to the operator $-\Delta_p$ involved (see also \cite{CF85, Kaw85W}). 
We refer to \cite[Remark 1.6]{BMS22} for some technical comments. 

\begin{theorem}[\cite{BMS22}] \label{thm_BMS}
Let $N\geq 1$, $\Omega \subset \R^N$ be a bounded, convex domain with $C^2$-boundary, and let $f \in C^{\sigma}_{{\rm loc}}([0,+\infty))$ for some $\sigma \in (0,1]$. 
Let $p>1$ and $u\in W^{1,p}_0(\Omega)$ be a weak solution of
$$\begin{cases}
-\Delta_p u = f(u) & \Omega,
\\
u>0 & \Omega,
\\
u=0 & \partial \Omega.
\end{cases}$$
Set
$$M_f:=\inf \big\{ t>0 \mid f(t) = 
0\big\} >0$$
and
$$\varphi(t):= \int_1^t \frac{1}{F^{1/p}(s)} ds, \quad t \in (0, M_f)$$
where $F(t):= \int_0^t f(\tau) d\tau$ for $t \in [0,+\infty)$. 
Assume $f >0$ on $(0,M_f)$, 
and in addition
\begin{itemize}
\item[i)] $F^{1/p}$ concave on $(0,M_f)$,
\item[ii)] $\frac{F}{f}$ convex on $(0,M_f)$.
\end{itemize}
Then $\varphi(u)$ is concave. In particular, $u$ is quasiconcave.
\end{theorem}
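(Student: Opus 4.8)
The plan is to establish concavity of $\varphi(u)$ via the \emph{constant rank theorem} combined with a boundary analysis, which is the standard modern route for these problems (and indeed the method behind Theorem~\ref{thm_BMS}). First I would introduce the new unknown $w:=\varphi(u)$, where $\varphi'(t)=F^{-1/p}(t)>0$, so that $w$ is a strictly increasing reparametrization of $u$ and solves a transformed quasilinear equation. The key computation is to rewrite the $p$-Laplacian equation $-\Delta_p u = f(u)$ in terms of $w$: since $u=\varphi^{-1}(w)$ and $\nabla u = (\varphi^{-1})'(w)\nabla w$, the equation for $w$ takes the form $-\Delta_p w = g(w,|\nabla w|)$ for an explicit right-hand side. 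The point of the specific choice $\varphi(t)=\int_1^t F^{-1/p}$ is that it is \emph{designed} to cancel the gradient-dependent lower-order terms in the most favorable way, turning the equation into one whose structure is compatible with a concavity maximum principle.

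The core of the argument is then a \emph{concavity function} (or two-point) maximum principle: I would define, for $x,y\in\overline\Omega$ and $\lambda\in[0,1]$, the function
\begin{equation}
  \label{eq:concfn}
  C(x,y,\lambda) := w\big(\lambda x + (1-\lambda)y\big) - \lambda\, w(x) - (1-\lambda)\, w(y),
\end{equation}
and aim to show $C\ge 0$ throughout, which is exactly the concavity of $w=\varphi(u)$. The heart of the matter is to prove that a negative interior minimum of $C$ cannot occur: at such a minimum the first-order conditions force the gradients of $w$ at $x$, $y$ and at the midpoint $z=\lambda x+(1-\lambda)y$ to be aligned, and then one plugs the differential equation into the second-order information. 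Here hypotheses (i) and (ii) enter decisively: the concavity of $F^{1/p}$ and the convexity of $F/f$ are precisely the algebraic inequalities needed to sign the resulting combination of second derivatives and keep the quasilinear structure cooperative, so that the maximum principle yields a contradiction with $C<0$.

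The main obstacle I anticipate is twofold. First, there is the \emph{boundary behavior}: because $u=0$ on $\partial\Omega$ and the integrand $F^{-1/p}$ may be singular as $t\to 0^+$, the transformed function $w=\varphi(u)$ may tend to $-\infty$ at $\partial\Omega$, so one must verify that the minimum of $C$ in \eqref{eq:concfn} is attained in the interior and that the standard Hopf/boundary-point arguments (using the $C^2$-regularity of $\partial\Omega$ and its convexity) rule out a boundary minimum. Second, the $p$-Laplacian is \emph{degenerate or singular} at critical points of $u$, so the concavity maximum principle cannot be applied naively; one typically works on the set where $\nabla u\ne 0$, controls the critical set, and uses a regularization or a viscosity-type formulation to make the second-order computation rigorous. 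I would therefore carry out the argument in the order: (1) transform the equation and record the ODE identities that (i)--(ii) encode; (2) set up $C$ and reduce concavity to excluding a negative interior minimum; (3) derive the first- and second-order conditions at a putative minimum and reach a contradiction using (i)--(ii); (4) handle the boundary and the degeneracy of $\Delta_p$ to close the proof.
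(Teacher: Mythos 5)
First, a point of order: the paper does not prove Theorem~\ref{thm_BMS} at all --- it is quoted verbatim from \cite{BMS22} as a known result, and the only in-paper material surrounding it is the reformulation in Lemma~\ref{lem_corol_BMS} (for $f\in C^1$, via the monotonicity of $Ff'/f^2$) which is then fed into the quasilinear Theorem~\ref{thm_main_concave}. So there is no internal proof to compare your attempt against; the relevant benchmark is the argument in \cite{BMS22} itself.

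Measured against that, your outline identifies the right skeleton (substitute $w=\varphi(u)$, run a Korevaar--Kennington two-point concavity-function argument, and separately control the boundary and the degeneracy of $\Delta_p$), but as written it is a plan rather than a proof: every decisive step is deferred. Concretely: (1) you never carry out the computation showing what equation $w$ satisfies --- one finds $-\Delta_p w = \tfrac{f}{F^{(p-1)/p}}\bigl(1+\tfrac{p-1}{p}|\nabla w|^p\bigr)$, and it is hypothesis~$(i)$ (concavity of $F^{1/p}$, i.e.\ monotonicity of $f F^{-(p-1)/p}$) that makes the right-hand side non-increasing in $w$, which is what the two-point maximum principle actually needs; you assert only that $(i)$--$(ii)$ ``are precisely the algebraic inequalities needed,'' which is the whole content of the theorem. (2) You do not say how hypothesis~$(ii)$ is used: in \cite{BMS22} the convexity of $F/f$ governs the boundary behavior of $u$ (via comparison with the one-dimensional problem), which is what excludes a negative infimum of $C$ being attained with a point on $\partial\Omega$; without this, the boundary step of your program cannot close. (3) The solution is only a weak $W^{1,p}_0$ solution with $f$ merely H\"older, so the pointwise second-order conditions at an interior minimum of $C$ are not available; the approximation/regularization scheme you gesture at is a substantial part of the actual proof and is left entirely unspecified. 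Finally, the opening appeal to the ``constant rank theorem'' is a misattribution: that is a different technique (Caffarelli--Friedman, Korevaar--Lewis), used to upgrade an already-known weak concavity to strict concavity, not to establish concavity from scratch, and it plays no role in the two-point argument you then describe.
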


%


A first goal is the extension of Theorem \ref{thm_BMS} to the quasilinear case \eqref{eq_main_intr}.
Equation in \eqref{eq_main_intr}, which can be rewritten also as
\begin{equation*}\label{eq_int_a_u}
-a(u) \Delta u - \frac{a'(u)}{2} |\nabla u|^2 = f(u), 
\end{equation*}
naturally arises by considering the (formal) Euler-Lagrange equation of the energy functional
$$u \mapsto \frac{1}{2} \int_{\Omega} a(u) |\nabla u|^2 - \int_{\Omega} F(u)$$
where $F(t)= \int_0^t f(\tau) d \tau$. 
By writing $a(t) \equiv 1 \pm 2 t^2 (\ell'(t^2))^2$, with $\ell \in C^2(\R)$, the equation takes the form
\begin{equation}\label{eq_intr_h}
-\Delta u \mp u \, \ell'(u^2) \Delta(\ell(u^2))= f(u); 
\end{equation}
apart from the classical semilinear case $\ell \equiv const$ or $\ell (t) = \sqrt{t}$ (i.e. $a\equiv const$), several physical models have been developed through equations of the type \eqref{eq_intr_h}.
For example, when $\ell(t)=t$ (i.e. $a(t)=1 + 2t^2 \geq 1$), the classical \emph{MNLS equation}
\begin{equation}\label{eq_intr_MNLS}
-\Delta u - u \Delta(u^2)=f(u)
\end{equation}
arises 
in the fluid theory of upper-hybrid solitons formation \cite{PG76}, 
in the study of electron-phonon interactions \cite{BEPZ01}, 
and in the excitation on a hexagonal lattice to describe fullerenes and nanotubes \cite{BH06} 
($f(s)=s^3$),
as well as 
 in the dynamics of condensate wave functions in superfluid films (\cite{Kur81}, 
$f(s)=s+\frac{s}{(1+s^2)^3}$). 
See also 
\cite{LS78} 
where collapse of quasimonochromatic oscillations and plasma waves is studied. 

By choosing instead $\ell(t)=\sqrt{1+t}$ (i.e. $a(t)=1 - \frac{t^2}{2(1+t^2)} \geq \frac{1}{2}$) and $f(t) = t- \frac{t}{\sqrt{1+t^2}}$ we get the so called \emph{relativistic nonlinear Schrödinger equation}
\begin{equation}\label{eq_intr_relSch}
-\Delta u - \left( 1- \frac{2+\Delta\sqrt{1+u^2}}{\sqrt{1+u^2}}\right)u=0
\end{equation}
which appears related to the self-focusing and channel formation in the nonparaxial propagation of short intense laser pulse through an underdense plasma (governed by charge-displacement due to the ponderomotive force) \cite{BG93, CS93}.
A different model is instead given by $\ell(t)= \sqrt{1-t}$ (i.e. $a(t)= 1 + \frac{t^2}{2(1-t^2)} \geq 1$) and $f(t) = t- \frac{t}{\sqrt{1-t^2}}$, 
\begin{equation}\label{eq_intr_Heisenb}
-\Delta u + u\frac{\Delta\sqrt{1-u^2}}{2\sqrt{1-u^2}} = u - \frac{u}{\sqrt{1-u^2}}
\end{equation}
which arises in the study of excitations in classical planar Heisenberg ferromagnets \cite{TH81}. 

\smallskip

Mathematically, equation \eqref{eq_intr_h} has been extensively studied on the entire space $\Omega=\R^N$ \cite{Col03, CJ04}; 
we refer also to \cite{PSW02, 
Moa06, DMS07, RS10} 
for some results involving external potentials, 
and to \cite{
LPT99} 
for some classical results about dynamical properties.
For results on bounded domains \eqref{eq_main_intr} we refer instead to \cite{LZ13, LLW13}
(see also \cite{GS13} 
for explosive solutions).

\smallskip
As regards concavity properties for \eqref{eq_main_intr}, by reading the equation as 
\begin{equation*}
	-\Delta u = \tfrac{a'(u)}{2 a(u)} |\nabla u|^2 + \tfrac{f(u)}{a(u)} =: B(u,\nabla u)
\end{equation*}
in \cite{BS13} 
it has been showed that, if $\Omega$ is a convex domain and $u$ is a solution of \eqref{eq_main_intr} with $\partial_{\nu}u >0$ on the boundary, then $u$ is $\gamma$-concave, for some $\gamma\leq 1$, provided that the function
\begin{equation}\label{eq_int_old_result}
t \in [0, +\infty) \mapsto \frac{a'(t^{\frac{1}{\gamma}})}{2a(t^{\frac{1}{\gamma}})} t^{1+\frac{1}{\gamma}} \beta + \frac{f(t^{\frac{1}{\gamma}})}{a(t^{\frac{1}{\gamma}})} t^{3-\frac{1}{\gamma}}
\end{equation}
is concave for every $\beta \geq 0$ (see also \cite{Kor83Ca, Kor83Co, Ken85}); see Remark \ref{ex_fisico_power} for some comments. 

\smallskip

In the present paper, we obtain a concavity result for solutions of problem \eqref{eq_main_intr}, where the role of $f$ and $a$ (i.e. of the source and the operator) naturally arises not only in the assumptions, but also in the concaving function $\varphi$. We thus prove the following (see also Theorem \ref{thm_main_concave} and Remark \ref{rem_general_explod} for some generalization).

\begin{theorem} \label{thm_corol_main_concave}
Let $N\geq 1$, $\Omega \subset \R^N$ be a bounded, convex domain with $C^2$-boundary, $a\in C^1([0,+\infty))$ satisfying for some $\nu>0$
\begin{equation}\label{eq_intr_elliptic}
a(t) \geq \nu >0, \quad \hbox{ for any $t \in \R$},
\end{equation}
and $f \in C^{\sigma}_{\rm loc}([0,+\infty))\cap C^1((0,+\infty))$ for some $\sigma \in (0,1]$. 
Let $u \in C(\overline{\Omega}) \cap C^2(\Omega)$ 
be a classical solution of \eqref{eq_main_intr}. Set
\begin{align*}
\xi(t) &:= \frac{F(t)}{f(t)} \frac{a'(t)}{a(t)}, \quad \quad t \in (0, M_f), \\
&= \frac{a'(t)}{F'(t)} \left(\frac{a(t)}{F(t)}\right)^{-1},
\end{align*}
where $F(t):= \int_0^t f(\tau) d\tau$ and
$$M_f:=\inf \{ t>0 \mid f(t) = 
0\}>0.$$
Assume moreover $f >0$ on $(0,M_f)$ and 
\begin{itemize}
\item[i)] $\sqrt{F}$ is concave on $(0, M_f)$;
\item[ii)] $\frac{F}{f}$ is convex on $(0, M_f$);
\item[iii)] $\xi$ is non-decreasing on $(0, M_f)$ with $\lim_{t \to 0^+} \xi(t) \leq 0$.
\end{itemize}
Then $\varphi(u)$ is concave, where 
$$ \varphi(t):= \int_{\mu}^t \sqrt{\frac{a(s)}{F(s)}} ds, \quad t \in (0,+\infty),$$
$\mu:= \nu^{-\frac{1}{2}}$. In particular, $u$ is quasiconcave.
%
\end{theorem}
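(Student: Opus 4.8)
The plan is to remove the quasilinear term by a change of unknown that turns \eqref{eq_main_intr} into a semilinear equation, and then to invoke Theorem \ref{thm_BMS} with $p=2$. First I introduce
$$\Psi(t):=\int_0^t\sqrt{a(s)}\,ds,\qquad t\ge0,$$
which by \eqref{eq_intr_elliptic} is a strictly increasing $C^2$-diffeomorphism of $[0,+\infty)$ with $\Psi(0)=0$, and set $v:=\Psi(u)$. Then $\nabla v=\sqrt{a(u)}\,\nabla u$ and $\Delta v=\sqrt{a(u)}\,\Delta u+\tfrac{a'(u)}{2\sqrt{a(u)}}|\nabla u|^2$, so dividing the equation $-a(u)\Delta u-\tfrac{a'(u)}{2}|\nabla u|^2=f(u)$ by $\sqrt{a(u)}$ shows that $v\in C(\overline\Omega)\cap C^2(\Omega)$ is a classical solution of
$$-\Delta v=\tilde f(v)\ \text{ in }\Omega,\qquad v>0\ \text{ in }\Omega,\qquad v=0\ \text{ on }\partial\Omega,$$
where $\tilde f:=\big(f/\sqrt a\big)\circ\Psi^{-1}$. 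Since $\Psi$ is a diffeomorphism fixing the origin, $v$ inherits positivity, boundary values and (via Hopf) the sign $\partial_\nu v>0$, so the geometric hypotheses of Theorem \ref{thm_BMS} are met.

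The substitution $\tau=\Psi(s)$ in $\int_0^\tau\tilde f$ gives $\tilde F:=F\circ\Psi^{-1}$ (hence $M_{\tilde f}=\Psi(M_f)$), and the same substitution in the concavizing integral yields
$$\int_{\Psi(\mu)}^{\Psi(t)}\frac{d\tau}{\sqrt{\tilde F(\tau)}}=\int_\mu^t\sqrt{\frac{a(s)}{F(s)}}\,ds=\varphi(t).$$
Thus $\varphi(u)=\tilde\varphi(v)+\text{const}$, with $\tilde\varphi$ the function produced by Theorem \ref{thm_BMS} for $\tilde f$ and $p=2$; concavity of $\varphi(u)$ is therefore \emph{equivalent} to concavity of $\tilde\varphi(v)$, which Theorem \ref{thm_BMS} delivers once its two hypotheses are verified for $\tilde f$.

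It remains to translate those hypotheses. Writing $R:=F/f$ and differentiating through $\tfrac{d}{d\tau}=a^{-1/2}\tfrac{d}{ds}$, one finds, as functions of $s=\Psi^{-1}(\tau)$,
$$\Big(\tfrac{\tilde F}{\tilde f}\Big)''=\tfrac{1}{\sqrt a}\Big(R''+\tfrac12\,\xi'\Big),\qquad \big(\sqrt{\tilde F}\big)''=-\tfrac{f^2}{4\,a\,F^{3/2}}\,\big(\xi+2R'-1\big).$$
The first identity shows that convexity of $\tilde F/\tilde f$ is immediate from the convexity of $F/f$ in (ii) and the monotonicity of $\xi$ in (iii). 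The second shows that $\sqrt{\tilde F}$ is concave exactly when $\Phi:=\xi+2R'-1\ge0$ on $(0,M_f)$, and this is where all three hypotheses must cooperate: condition (i) is equivalent to $R'\ge\tfrac12$ (so $2R'-1\ge0$), while (ii)--(iii) give $R''\ge0$ and $\xi'\ge0$, so that $\Phi$ is non-decreasing and it suffices to show $\lim_{t\to0^+}\Phi\ge0$.

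I expect this boundary estimate for $\Phi$ at the origin to be the main obstacle, and it is precisely where the normalization $\lim_{t\to0^+}\xi\le0$ of (iii) is used, in tandem with (i): since $\lim_{t\to0^+}\Phi=\lim_{t\to0^+}\xi+\big(2\lim_{t\to0^+}R'-1\big)$ and the bracket is nonnegative by (i), one checks that under the standing assumptions the limit $\lim_{t\to0^+}\xi$ is in fact null (being $\le 0$ and, from $\xi=R\,a'/a$ with $R=F/f\to0$, controlled from below), which closes the argument. With both hypotheses verified, Theorem \ref{thm_BMS} yields concavity of $\tilde\varphi(v)$, hence of $\varphi(u)$; finally, as $\varphi$ is strictly increasing the superlevel sets of $u$ coincide with those of the concave function $\varphi(u)$ and are convex, i.e.\ $u$ is quasiconcave.
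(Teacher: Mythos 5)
Your proof is correct and follows essentially the same route as the paper: the substitution $v=\Psi(u)=\int_0^u\sqrt{a(s)}\,ds$ is exactly the inverse of the change of variables $u=g(v)$ used in Section~\ref{sec_quasilin}, and both arguments conclude by applying the semilinear concavity result of \cite{BMS22} to $-\Delta v=\big(f/\sqrt a\big)(g(v))$. The only (minor, essentially cosmetic) difference is that you verify the two original hypotheses of Theorem~\ref{thm_BMS} for $\tilde f$ by explicit second-derivative computations, whereas the paper checks the equivalent single condition that $\tfrac{Hh'}{h^2}=\big(\tfrac{Ff'}{f^2}-\tfrac12\xi\big)\circ g$ be non-increasing with limit at most $\tfrac12$ (Lemma~\ref{lem_corol_BMS} and Theorem~\ref{thm_main_concave}).
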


As proved in \cite[Theorem 2.2.5]{CD94}, when the growth of $f$ at infinity is at most critical, 
each weak solution of problem \eqref{eq_main_intr} 
belongs to $L^\infty(\Omega)$.
Then some nice smoothness results ($C^2$ in our case) when $a$, $f$ and $\Omega$ are smooth follow from \cite{LU68}. This justifies the request on the solution $u$ to be classical in Theorem \ref{thm_corol_main_concave}.

Clearly, when $a \equiv 1$ we recover Theorem \ref{thm_BMS} with $p=2$ and $f\in C^1$, being $\xi \equiv 0$. More generally, we see that, even if conditions $i)$ and $ii)$ of Theorem \ref{thm_corol_main_concave} match with conditions $i)$ and $ii)$ of Theorem \ref{thm_BMS}, nevertheless the function $a$ influences both the assumptions (by requiring in addition condition $iii)$) and the concaving function $\varphi$.

We remark that, apart from the naturalness of the function $\varphi$, to the author's knowledge this function is actually the first tentative of finding a $\varphi$ which makes solutions of \eqref{eq_main_intr} concave: as a matter of fact, it is not known if powers of the solutions are concave or not, when $a \nequiv const$. See Remark \ref{ex_fisico_power} for further comments.

We notice that, by the ellipticity condition \eqref{eq_intr_elliptic}, $a$ plays no relevant role for the asymptotic behavior of $\varphi$ in the origin $t\to 0$; namely, by de l'Hôpital theorem we have
$$\lim_{t \to 0} \frac{ \int_{\mu}^{t} \sqrt{\frac{a(s)}{F(s)}} ds}{ \int_1^t \frac{1}{\sqrt{F(s)}} ds} = \lim_{t \to 0} \frac{ \sqrt{\frac{a(t)}{F(t)}} }{ \frac{1}{\sqrt{F(t)}} } = \sqrt{a(0)} 
 \in (0,+\infty).$$
On the other hand, the behaviour of $\varphi$ at infinity is of no importance, since it is applied to bounded solutions $u \in L^{\infty}(\R)$: thus, the role played by $a$ is related only to the pointwise shape of the function $\varphi$, and it is felt more in the case of large solutions (see Figure \ref{fig_graf_a}).

The idea of the proof rely on a suitable change of variable \cite{CJ04,GS13} 
which allows to bring the problem back to a semilinear one.


\medskip

A second goal of this paper is to show that the assumptions $i)$ and $ii)$ in Theorems \ref{thm_BMS} and \ref{thm_corol_main_concave} are not merely technical, by indicating an example with $a\equiv 1$ where $i$-$ii)$ do not hold, and a non-quasiconcave solution indeed exists; we use some ideas contained in \cite{HNS16}. 
See also \cite{Kin78, Kor83Ca, Kor83Co, 
LS90Ip, Shi89} 
for other counterexamples related to the convexity framework.

\begin{theorem}\label{thm_counter_ex}
There exists a smooth, concave and symmetric (with respect to the axes) bounded domain $\Omega \subset \R^2$ with $C^{\infty}$-boundary, and a function $f \in C^{\infty}(\R)$, $f>0$ such that
\begin{itemize}
\item[i)] $\sqrt{F}$ is not concave on $(0,+\infty)$,
\item[ii)] $\frac{F}{f}$ is not convex on $(0,+\infty)$,
\end{itemize}
and the problem
$$\begin{cases}
-\Delta u = f(u) & \Omega,
\\
u>0 & \Omega,
\\
u=0 & \partial \Omega,
\end{cases}$$
admits both a quasiconcave solution and a non-quasiconcave solution.
\end{theorem}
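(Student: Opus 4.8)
The plan is to exhibit a single pair $(\Omega,f)$, with $\Omega\subset\R^2$ convex and doubly symmetric and $f\in C^\infty$, $f>0$ violating i) and ii), for which the Dirichlet problem has two positive solutions of genuinely different geometry. (I take $\Omega$ convex rather than concave because convexity is \emph{forced}: the superlevel sets $\{u\ge c\}$ of a quasiconcave $u$ are convex and increase to $\{u>0\}=\Omega$ as $c\downarrow 0$, so a quasiconcave solution can exist only on a convex domain.) I would encode non-quasiconcavity in the most robust way, namely through a solution $w$ with two distinct local maxima separated by a saddle. Since $f>0$ makes $-\Delta w>0$, any solution is superharmonic and hence has no interior local minimum; two maxima can therefore be joined only through a saddle, a configuration available in dimension two but not in dimension one, where $w''=-f(w)<0$ forces every positive solution to be strictly concave and hence unimodal. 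For each level $c$ strictly between the saddle value and the smaller maximum, $\{w\ge c\}$ then splits into two components and is not convex, so $w$ is not quasiconcave. The quasiconcave solution will instead be a single symmetric small-amplitude bump.

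First I would fix the nonlinearity by setting $f\equiv 1$ on a low range $[0,a]$ and continuing it smoothly, positively and superlinearly on $(a,+\infty)$ with a pronounced spike-type growth. This choice buys two things. On $[0,a]$ one has $F(t)=t$, so $\sqrt F$ is concave and $F/f$ is affine, whereas a direct computation across the superlinear bump shows that $\sqrt F$ loses concavity and $F/f$ loses convexity; hence i) and ii) fail on $(0,+\infty)$, as required (this verification is routine once $f$ is pinned down, so I will not dwell on it). Moreover, scaling $\Omega$ so that its torsion function $T$, the unique solution of $-\Delta T=1$, $T=0$ on $\partial\Omega$, satisfies $\max_\Omega T<a$, we get $-\Delta T=1=f(T)$ on the whole range of $T$; thus $T$ is itself a solution of our problem, and by the classical result of Makar--Limanov \cite{ML71} on convex planar domains $\sqrt T$ is concave, so $T$ is quasiconcave. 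This produces the quasiconcave solution cleanly and independently of Theorems \ref{thm_BMS}--\ref{thm_corol_main_concave}, which are anyway inapplicable here since i) and ii) are violated.

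The core step is to produce, for the \emph{same} $(\Omega,f)$, the two-peak solution $w$. I would take $\Omega$ to be a doubly symmetric convex domain elongated in the $x$-direction (a $C^\infty$-smoothed stadium, or a superellipse $|x/A|^p+|y/B|^p\le 1$ with $A\gg B$ and $p$ a large even integer, which is $C^\infty$, convex and symmetric in both axes), and exploit the superlinear spike of $f$ to generate a high-amplitude solution that concentrates. Following the ideas of \cite{HNS16}, I would place the two concentration points symmetrically on the major axis near the two ends: the interaction of the concentrations being repulsive, on a sufficiently elongated domain a balanced symmetric two-peak configuration is realized as a genuine second solution, whose amplitude exceeds $a$ and which therefore feels the non-convexizing part of $f$. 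Because the outer boundary $\partial\Omega=\{w=0\}$ is a low level set, it can be kept convex, while the high superlevel sets around the two peaks are disconnected; this makes $w$ non-quasiconcave on the convex domain $\Omega$, completing the dichotomy (smoothness of both solutions being guaranteed by standard elliptic regularity from the smoothness of $f$ and $\partial\Omega$).

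The hard part will be precisely this simultaneous control of shape and multiplicity. On a convex domain the geometry actively discourages two peaks: convexity forces the width along the major axis to be unimodal and the distance-to-boundary function to have a single connected ridge rather than two separated maxima, so the splitting into two peaks must be driven by the nonlinearity against the domain, and one must verify that the balanced two-peak configuration is a true critical point (e.g.\ by a finite-dimensional reduction to the peak locations, showing that the reduced energy admits an interior critical configuration with the peaks pushed toward the ends). There is moreover a tension in the geometric scales: the domain must be thin enough that $\max_\Omega T<a$, so that the torsion function is the quasiconcave solution, yet not so thin that the two-dimensional saddle degenerates into the forbidden one-dimensional regime. Reconciling these requirements while keeping $\Omega$ convex and $f$ positive, and while retaining the failure of i) and ii), is the delicate core of the proof, and is exactly where the construction of \cite{HNS16} is invoked.
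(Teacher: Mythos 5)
Your overall frame matches the paper's: an elongated, doubly symmetric convex domain (the word ``concave'' in the statement is indeed a slip for ``convex'', as your superlevel-set remark shows); $f\equiv 1$ on a low range so that the torsion function, being below that range and $\tfrac12$-concave by Makar-Limanov \cite{ML71}, supplies the quasiconcave solution; and a bump in $f$ above that range which simultaneously destroys conditions $i)$--$ii)$ and generates a second, non-quasiconcave solution. But the core existence step is where your proposal has a genuine gap. You propose to obtain the second solution as a \emph{two-peak} concentrating solution via a finite-dimensional reduction to the peak locations, driven by a superlinear continuation of $f$. There is no small parameter anywhere in the problem (no $\eps^2$ in front of the Laplacian, no exponent tending to critical), so there is no concentration regime in which a Lyapunov--Schmidt reduction can be set up; and producing a second positive solution with two interior bumps for a \emph{fixed} superlinear nonlinearity on a \emph{convex} planar domain runs directly against the known uniqueness results and conjectures in that setting (cf.\ \cite{Lin94}). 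Moreover \cite{HNS16}, which you invoke precisely at this point, does not construct a two-peak solution at all.

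What the paper (following \cite{HNS16}) actually does is different in mechanism. The nonlinearity is $f(t)=1+\mu g'(t)$ with $g$ a smooth cutoff equal to $0$ on $(-\infty,1]$ and $1$ on $[2,+\infty)$ — so $f$ is a \emph{bounded} perturbation of the torsion source, not superlinear — and the second solution $u_\alpha$ is obtained by minimizing the torsional energy under the constraint $\int_{\Omega_\alpha} g(v)=1$, which forces $u_\alpha$ to exceed level $1$ on a set of prescribed measure; $\mu$ is the Lagrange multiplier. The resulting $u_\alpha$ has a \emph{single} elevated bump near the center sitting on a long low ridge ($u_\alpha\approx\psi(y)=\tfrac{1-y^2}{2}$ in the arms), and its superlevel set at a level just below $\tfrac12$ is \emph{connected} but cross-shaped — tall near the center, thin along the major axis — hence non-convex; no saddle-between-two-maxima configuration is needed. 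Finally, the verification that $i)$ and $ii)$ fail, which you defer as routine, is in fact the quantitative heart of the optimality claim: it rests on the lower bound $\mu>1/\norm{g'}_\infty\ge\tfrac14$ for the Lagrange multiplier, proved by a comparison-principle argument, and it is exactly this bound that pushes $\sqrt{F}$ above its tangent line at $t=1$ (and $F/f$ below its tangent line), so it cannot be decoupled from the construction of the second solution.
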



The paper is organized as follows. 
\smallskip

In Section \ref{sec_counterex} we discuss the optimality of the conditions in Theorems \ref{thm_BMS} and \ref{thm_corol_main_concave}, by exhibiting a counterexample and proving Theorem \ref{thm_counter_ex}. In Section \ref{sec_quasilin}, instead, we extend Theorem \ref{thm_BMS} to the quasilinear case \eqref{eq_main_intr}, by giving the proof of Theorem \ref{thm_corol_main_concave}. Finally, in Section \ref{sec_ex_rem} we provide some examples and further comments.




\section{Quasi optimality of the assumptions}
\label{sec_counterex}

In this Section we show an example of a semilinear PDE set in a convex, regular (and symmetric) domain with a smooth strictly positive source $f$ not satisfying assumptions $i)$ and $ii)$ of Theorem \ref{thm_BMS}, and which indeed admits a non-quasiconcave solution; incidentally, this problem will admit also a nontrivial quasiconcave solution.

We recall that $u$ is said \emph{quasiconcave} if $u^{-1}((k,+\infty))$ is convex for every $k \in \R$, or equivalently if $u(\lambda x + (1-\lambda) y) \geq \min\{u(x), u(y)\}$ for each $x,y \in \Omega$, $\lambda \in [0,1]$. 
Moreover, $u>0$ is said \emph{$\gamma$-concave} for $\gamma \neq 0$ (resp. $\gamma=0$) if $\sgn(\gamma) u^{\gamma}$ is concave (resp. $\log(u)$ is concave). 

\smallskip

We start with some 
comments on Theorem \ref{thm_BMS}.

In 
\cite{BMS22}, even if not explicitly stated, the authors require 
$f$ to be only \emph{locally} Hölder continuous in Theorem \ref{thm_BMS}: indeed, if $\norm{u}_{\infty} \leq M$, then it is sufficient $f$ to be Hölder continuous in $[0,M]$. More specifically, if $M_f<+\infty$, then $\norm{u}_{\infty} \leq M_f$ by maximum principles \cite[Remark 1.6]{BMS22}; this implies that it is actually sufficient to define $\varphi$ only in $(0, M_f)$. Anyway, if $f\geq 0$, its definition can be extended in $(0,+\infty)$.


Moreover, we highlight that conditions $i)$ and $ii)$ of Theorem \ref{thm_BMS} are not connected: for example,
\begin{itemize}
\item $f(t)=t^p$ with $p \in [0,1]$ satisfies both $i)$ and $ii)$;
\item $f(t)=1+t^p$ with $p\in [0,1]$ satisfies $i)$ but not $ii)$;
\item $f(t)=t^p$ with $p>1$ satisfies $ii)$ but not $i)$;
\item $f(t)=1+t^p$ with $p>1$ does not satisfy either $i)$, neither $ii)$.
\end{itemize}

Finally, we highlight that, in non-power cases, the concaving function $\varphi$ is actually ``less concave'' than power choices of $u$: for example, if 
$$f(t)= \begin{cases}
t^p & \hbox{ for $t \in [0,1]$}, \\
t^q & \hbox{ for $t \in [1,+\infty)$},
\end{cases}
$$
with $0<p<q<1$, then \cite[Theorem 4.2]{Ken85} implies that $\psi(u)=u^{\frac{1-p}{2}}$ is concave, where the exponent is the biggest one given by the Theorem.
On the other hand, Theorem \ref{thm_BMS} implies that $\varphi(u) = \int_1^u F^{-\frac{1}{2}}(s) ds$ is concave, where we have $\varphi(t) \sim t^{\frac{1-q}{2}}$ as $t \to +\infty$.

\smallskip

The example we show in this Section is borrowed by the one developed in \cite{HNS16} 
and it is set in dimension $N=2$. Here we recall the main points of the construction (referring to \cite{HNS16} 
and \cite{Ass23} 
for details), and we make some further comments and suitable adaptations.

\medskip

\textbf{Step 1.}
First, we build a proper family of smooth convex planar domains $\Omega_{\alpha}$, depending on a parameter $\alpha >2$ (which will be definitely taken large enough). The domain $\Omega_{\alpha}$ are ``stadium-like'' shaped (see Figure \ref{fig_sets}) and symmetric with respect both axes.
In particular they can be formally defined by
$$\Omega_{\alpha}:= \big\{ (x,y) \in \R^2 \mid |x| < \alpha + \phi(y), \; |y|<1\big\}$$
where the function $\phi \in C([-1,1]) \cap C^{2}((-1,1))$ satisfies $\phi(\pm 1)=0$, $\phi'(t) \to \mp \infty$ as $t \to \pm 1$, and it can be chosen for example equal to
$$\phi(t):= \lambda \sqrt{1-t^2}, \quad t \in [-1,1],$$
for some $\lambda>0$; in this case $\phi \in C^{\infty}((-1,1))$. 

 \begin{figure}
\begin{center}
\includegraphics[scale=2.5]{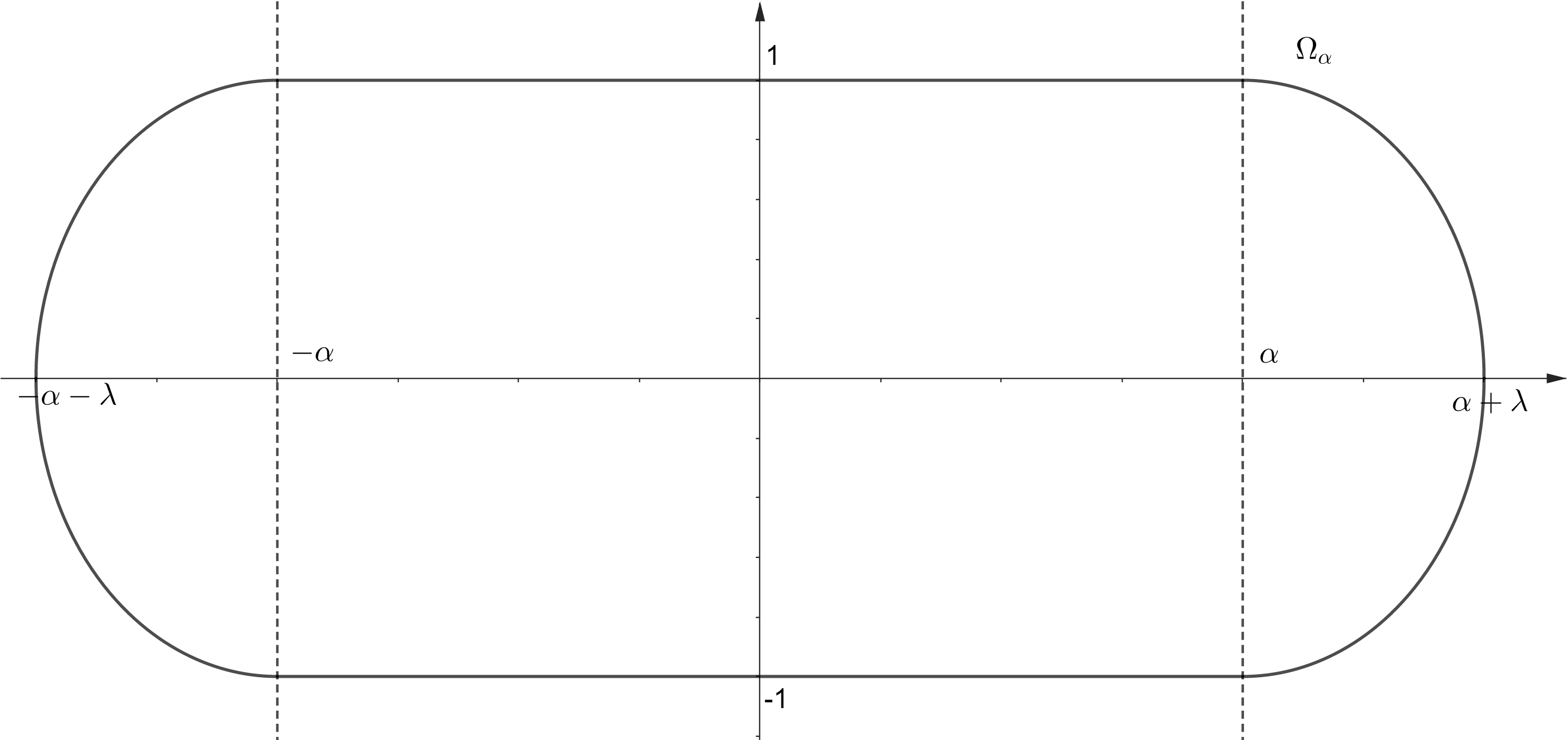}
\end{center}
\small{\caption{
``Stadium-shaped'' sets $\Omega_{\alpha}$.
\label{fig_sets}
}}
\end{figure}

\medskip

\textbf{Step 2.}
We consider the function $\psi \in C^{2}([-1,1])$
$$\psi(y):= \frac{1-y^2}{2}$$
which satisfies $-\psi''=1$ and $\psi(\pm 1)=0$. It results that $\psi$ is the unique minimizer of the functional in $N=1$
$$J: H^1_0((-1,1)) \to \R, \quad J(v):= \int_{-1}^1 \left( \frac{|v'|^2}{2} -v\right) .$$
Trivially extending $\psi$ to $\R^2$ by $\psi(x,y):=\psi(y)$, we also observe that $\psi$ solves the torsion problem
\begin{equation}\label{eq_torsion}
\begin{cases}
-\Delta v = 1 & \Omega_{\alpha},
\\
v = 0 & \partial \Omega_{\alpha}.
\end{cases}
\end{equation}
Moreover, $0 \leq \psi \leq \frac{1}{2}$. 

\medskip

\textbf{Step 3.}
We consider the unique minimizers $v_{\alpha} \in H^1_0(\Omega_{\alpha})$ of the functional in $N=2$
\begin{equation*}
J_{\alpha}: H^1_0(\Omega_{\alpha}) \to \R, \quad 
J_{\alpha}(v):= \int_{\Omega_{\alpha}} \left( \frac{|\nabla v|^2}{2} - v\right) 
\end{equation*}
associated to the torsion problem \eqref{eq_torsion}. It results that $v_{\alpha}$ are classical solutions of the equation, $\frac{1}{2}$-concave, and satisfy
\begin{equation}\label{eq_stima_v_a}
0 < v_{\alpha} < \psi \leq \frac{1}{2}.
\end{equation}

\medskip

\textbf{Step 4.}
We introduce a smooth function $g : \R \to \R$ such that
\begin{equation}\label{eq_propr1_g}
g \equiv 0 \, \hbox{ on $(-\infty, 1]$}, \quad g \equiv 1 \, \hbox{ on $[2, +\infty)$},
\end{equation}
$g$ non-decreasing and
\begin{equation}\label{eq_propr2_g}
\norm{g'}_{\infty} \leq 4.
\end{equation}
This function can be for example chosen in the following way: set $h(t):= e^{-\frac{1}{t}} \chi_{(0,+\infty)}$ we define
$$g(t):= \frac{h(t-1)}{h(t-1)+h(2-t)}, \quad t \in \R.$$
In this case one can straightforwardly check that $g \in C^{\infty}(\R)$ and $\norm{g'}_{\infty} \leq 2.$

\medskip

\textbf{Step 5.}
We consider now the constraint set
$$V_{\alpha}:= \left\{ v \in H^1_0(\Omega_{\alpha}) \mid \int_{\Omega_{\alpha}} g(v) = 1 \right\}$$
and study the functional $J_{\alpha}$ 
restricted to $V_{\alpha}$: one can prove that $J_{\alpha}$ admits a minimizer $u_{\alpha}$, with Lagrange multiplier $\mu_{\alpha}>0$, which in particular satisfies classically the Dirichlet problem
\begin{equation} \label{eq_probl_counter}
\begin{cases}
-\Delta v = 1 + \mu_{\alpha} g'(v) & \Omega_{\alpha},
\\
v > 0 & \Omega_{\alpha},
\\
v = 0 & \partial \Omega_{\alpha},
\end{cases}
\end{equation}
and
$$0 < v_{\alpha} \leq u_{\alpha} < \frac{5}{2}$$
with
\begin{equation}\label{eq_stima2_u_a}
 \norm{u_{\alpha}}_{\infty} >1.
\end{equation}
In addition we have
\begin{equation}\label{eq_lagran_mult}
\mu_{\alpha} > \frac{1}{\norm{g'}_{\infty}} \geq \frac{1}{4}.
\end{equation}
We give some details on \eqref{eq_lagran_mult}.

\begin{proof}[Proof of \eqref{eq_lagran_mult}]
Let $u_{\alpha}$ be fixed and consider $w_{\alpha}$ the classical solution of the linear problem
\begin{equation} 
\begin{cases}
-\Delta v = 1 + \frac{g'(u_{\alpha})}{\norm{g'}_{\infty}} & \Omega_{\alpha},
\\
v = 0 & \partial \Omega_{\alpha}.
\end{cases}
\end{equation}
Observed that 
$$ -\Delta w_{\alpha} = 1 + \frac{g'(u_{\alpha})}{\norm{g'}_{\infty}} \leq 2 = - \Delta (2 v_{\alpha})$$
we obtain, by the Comparison Principle, that $w_{\alpha} \leq 2 v_{\alpha}.$
If by contradiction $\mu_{\alpha} \leq \frac{1}{\norm{g'}_{\infty}}$, then
$$ -\Delta w_{\alpha} = 1 + \frac{g'(u_{\alpha})}{\norm{g'}_{\infty}} \geq 1+ \mu_{\alpha} g'(u_{\alpha}) = -\Delta u_{\alpha}$$
and again by the Comparison Principle $w_{\alpha} \geq u_{\alpha}$. Thus
$$ u_{\alpha} \leq w_{\alpha} \leq 2 v_{\alpha}.$$
On the other hand, exploiting \eqref{eq_stima2_u_a} and \eqref{eq_stima_v_a} we obtain, for some point $P\in \Omega_{\alpha}$,
$$1<u_{\alpha}(P) \leq w_{\alpha}(P) \leq 2 v_{\alpha}(P) <1,$$
getting a contradiction.
\end{proof}

\smallskip

\textbf{Step 6.}
Consider the level set
$$\omega_{\alpha}:=u_{\alpha}^{-1}((1,+\infty)).$$
One can show that $\omega_{\alpha}$ is symmetric with respect to both axes and fulfills the \emph{rectangle property}, i.e.
$$ \forall (\bar{x}, \bar{y}) \in \omega_{\alpha} \, : \, (-\bar{x}, \bar{x}) \times (-\bar{y}, \bar{y}) \subset \omega_{\alpha};$$
in particular, $\omega_{\alpha}$ is star-shaped with respect to the origin. Moreover, $\omega_{\alpha} \subset \Omega_{\alpha}$ are not \emph{too elongated} nor \emph{too thin}, that is there exist two $\alpha$-independent constants $C_1, C_2>0$ such that
$$0 < \sup_{(x,y) \in \omega_{\alpha}} |x| < C_1, \quad \hbox{and} \quad 0 < C_2 < \sup_{(x,y) \in \omega_{\alpha}} |y|,$$
for each $\alpha>2.$

\medskip

\textbf{Step 7.}
We show that, up to choosing $\alpha >2$ sufficiently large, one can make $u_{\alpha}$ arbitrary similar to the function $\psi$: namely, for every small $\eps>0$, there exists an $\alpha_0=\alpha_0(\eps)\gg 0$ such that, for any $\alpha>\alpha_0$ we have
$$|u_{\alpha}(x,y)-\psi(y)| < \eps \quad \hbox{for every $(x,y) \in \mc{S}_{\alpha}:=(k_0, \alpha - k_0) \times (-1,1) \subset \Omega_{\alpha}$}, $$
where $k_0 \in (0,\frac{\alpha_0}{4})$ is suitably chosen.

\medskip

\textbf{Step 8.}
We first observe that $v_{\alpha}>0$ are trivially solutions (for any $\alpha$) of problem \eqref{eq_probl_counter}, since $\norm{v_{\alpha}}\leq \frac{1}{2}<1$ by \eqref{eq_stima_v_a} and $g'_{|(-\infty,1)} \equiv 0$ by \eqref{eq_propr1_g}. Moreover $v_{\alpha}$ is $\frac{1}{2}$-concave and thus, in particular, problem \eqref{eq_probl_counter} admits a quasiconcave solution.

We show instead that the built solutions $u_{\alpha}$ are not quasiconcave: indeed, chosen $\eps$ small and 
$$\eta \in \Big(\frac{1}{2}- \frac{C_2^2}{8} + \eps, \frac{1}{2}-\eps\Big) \subset (0,1)$$
we pick three points $P$, $Q_{\alpha}$ and $M_{\alpha}$ (see Figure \ref{fig_three_points}) such that
$$P:= (0, C_2) \in \omega_{\alpha}, \quad Q_{\alpha}:= \Big (\frac{\alpha}{2}, 0\Big) \in \mc{S}_{\alpha}, \quad M_{\alpha} := \frac{P+Q_{\alpha}}{2}
\in \mc{S}_{\alpha}$$
which imply (roughly), for $\alpha \gg 0$,
$$u_{\alpha}(Q_{\alpha}) \approx \psi(Q_{\alpha}) = \frac{1}{2} > \eta > \frac{1}{2}-\frac{C_2^2}{8} = \psi(M_{\alpha}) \approx u_{\alpha}(M_{\alpha}).$$
So one has shown that 
$$P, Q_{\alpha} \in u_{\alpha}^{-1}((\eta, +\infty)), \quad \hbox{but} \quad M_{\alpha} \notin u_{\alpha}^{-1}((\eta, +\infty)).$$
Being $u_{\alpha}^{-1}((\eta, +\infty))$ not convex, we have that $u_{\alpha}$ is a solution of \eqref{eq_probl_counter} which is not quasiconcave.

\begin{figure}
\begin{center}
\includegraphics[scale=1.9]{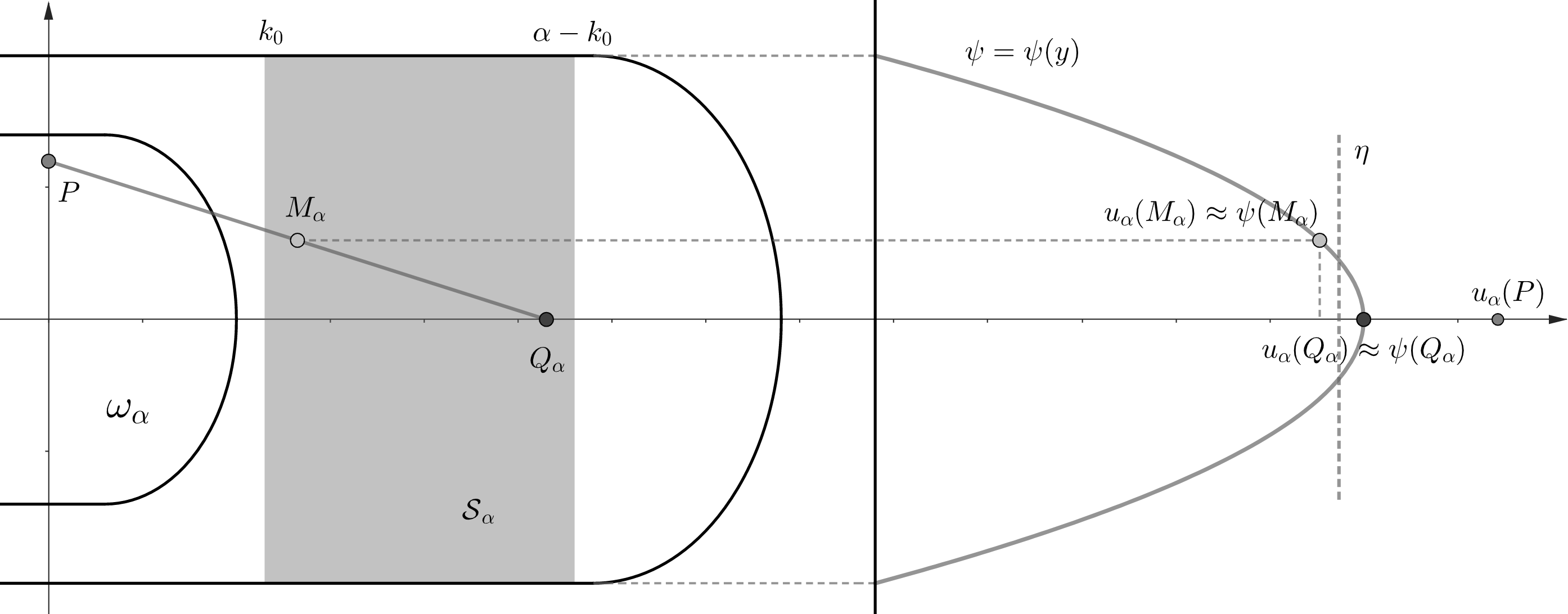}
\end{center}
\small{\caption{
Visual proof of the non-convexity of $u^{-1}((\eta,+\infty))$: indeed, \\ $u_{\alpha}(P)>1>\eta$ and $u(Q_{\alpha})\approx \frac{1}{2}>\eta$, while $u_{\alpha}(\frac{P+Q_{\alpha}}{2})\approx \frac{1}{2}-\frac{C_2^2}{8} <\eta$.
\label{fig_three_points}
}}
\end{figure}

\medskip

We are now ready to conclude Theorem \ref{thm_counter_ex}.

\begin{proof}[Proof of Theorem \ref{thm_counter_ex}]
What remains to show is that the function ($\alpha$ is now fixed, we write $\mu:= \mu_{\alpha}$)
$$f(t):=1+\mu g'(t), \quad t \in \R$$
does not satisfy $i)$ and $ii)$ of Theorem \ref{thm_BMS}. Notice first that $f\in C^{\infty}(\R)\subset {\rm Lip}_{\rm loc}(\R)$ and set $F(t):= \int_0^t f(\tau) d \tau = t + \mu g(t).$

\smallskip

\emph{$\bullet$ $\sqrt{F}$ is not concave.}
Observe that
$$\sqrt{F(t)} \equiv \sqrt{t} \, \hbox{ for $t \in (-\infty, 1]$}, \quad \sqrt{F(t)} \equiv \sqrt{t + \mu} \, \hbox{ for $t \in [2, +\infty)$},$$
and that the tangent line to $\sqrt{F}$ in $t=1$ is given by $y(t) = \frac{t+1}{2}$.
If by contradiction $\sqrt{F}$ were concave, then we would have
$$\sqrt{F(t)} \leq \frac{t+1}{2} \quad \hbox{for each $t \in \R$}$$
and in particular for $t=2$, which means
$$\sqrt{2+\mu} \leq \frac{3}{2},$$
i.e. $\mu \leq \frac{1}{4}$, in contradiction with \eqref{eq_lagran_mult}.

\smallskip

\emph{$\bullet$ $\frac{F}{f}$ is not convex.}
Since
$$\frac{F(t)}{f(t)} \equiv t \, \hbox{ for $t \in (-\infty, 1]$}, \quad \frac{F(t)}{f(t)} \equiv t+ \mu \, \hbox{ for $t \in [2, +\infty)$},$$
and since the tangent line to $\frac{F}{f}$ in $t=1$ is $y(t) = t + \mu$,
if we assume by contradiction $\frac{F}{f}$ convex, then
$$\frac{F(t)}{f(t)} \geq t+\mu \quad \hbox{for each $t \in \R$};$$
in particular, for $t=1$,
$$1 \geq 1+\mu,$$
impossible, since $\mu>0$. We can see the contradiction also exploiting \cite[Remark 1.5]{BMS22} (see Lemma \ref{lem_corol_BMS} below): in order to have $\frac{F}{f}$ convex we would need $\gamma=\frac{F f'}{f^2} = \frac{ (t+\mu g)(\mu g'')}{(1+\mu g')^2}$ non-increasing; but this is clearly not possible, since $\gamma(t) \equiv 0$ for $t \in (-\infty, 1] \cup [2, +\infty)$, and not identically zero in between (actually, it changes sign).
\end{proof}

We end this Section with some comments.

\begin{remark}
We showed the counterexample in dimension $N=2$, for $p=2$ and $a\equiv 1$. While the generalization to $p>1$ and $a\nequiv 1$ seems to create no particular difficulty, it would be instead interesting to show a counterexample also in dimension $N\geq 3$. We indeed highlight that, when $N=1$, then all the convex sets are balls, thus by \cite{GNN79} 
the solutions are automatically decreasing (and positive), thus quasiconcave.

A second goal is to show two counterexamples which deny assumptions $i)$ (but not $ii)$) and $ii)$ (but not $i)$) respectively, in Theorem \ref{thm_BMS}. 
To this regard we observe that, in order to deny $ii)$ it was sufficient \eqref{eq_propr1_g}, while to deny $i)$ we needed the extra assumption on $g$ \eqref{eq_propr2_g}.

We further notice that the built $\sqrt{F}$ (resp. $\frac{F}{f}$) is definitely concave (resp. convex) at the origin and at infinity, and this shows that such properties cannot be relaxed in this sense.
\end{remark}


\section{Modified quasilinear Schrödinger equations}
\label{sec_quasilin}

We move now to the study of the quasilinear equation \eqref{eq_main_intr}, namely 
\begin{equation}\label{eq_main_sect}
\begin{cases}
-\dive\big(a(u) \nabla u\big) + \frac{a'(u)}{2} |\nabla u|^2 = f(u) & \Omega,
\\
u>0 & \Omega,
\\
u=0 & \partial \Omega.
\end{cases}
\end{equation}


We deal first with some properties of the change of variables which allows to transform \eqref{eq_main_sect} into a semilinear problem; they can be borrowed by the ones in \cite{GS13}, 
with some additional standard arguments (see \cite{Ass23} for details).

\begin{proposition}
Let $a \in C^1([0,+\infty))$ 
and assume the strict ellipticity of the problem, i.e. there exists $\nu>0$ such that
\begin{equation}
	\label{eq_ellipticity}
a(t) \geq \nu >0, \quad \hbox{ for any $t \in \R$}.
\end{equation}
Consider the ODE Cauchy problem
\begin{equation}\label{eq_Cauchy_prob}
\begin{cases}
g'=\frac{1}{\sqrt{a(g)}} & (0,+\infty),
\\
g(0)=0, &
\end{cases}
\end{equation}
then the uniquely determined $g$ 
is a riparametrization of the identity, i.e. $g \in C^2([0,+\infty)) \cap {\rm Lip}([0,+\infty))$, it is strictly increasing, invertible, with $g([0,+\infty))=[0,+\infty)$ (and the inverse $g^{-1}$ shares the same properties).

Let in addition $f \in C
([0,+\infty))$. Then $u \in C(\overline{\Omega}) \cap C^2(\Omega)$ is a classical solution of the quasilinear equation \eqref{eq_main_sect} if and only if
$$v:=g^{-1}(u), $$
$v \in C(\overline{\Omega}) \cap C^2(\Omega)$, is a solution of the semilinear equation
\begin{equation}\label{eq_change_var}
\begin{cases}
-\Delta v = \left(\frac{f}{\sqrt{a}}\right)(g(v))=:h(v) & \Omega,
\\
v>0 & \Omega,
\\
v=0 & \partial \Omega.
\end{cases}
\end{equation}
\end{proposition}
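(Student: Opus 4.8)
The plan is to treat the two assertions separately: first the qualitative properties of the solution $g$ of the Cauchy problem \eqref{eq_Cauchy_prob}, and then the equivalence of the two boundary value problems under the substitution $u=g(v)$.

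For the first part I would argue as follows. Since $a\in C^1([0,+\infty))$ and $a\geq \nu>0$, the map $y\mapsto 1/\sqrt{a(y)}$ is $C^1$, hence locally Lipschitz, and satisfies $0<1/\sqrt{a(y)}\leq 1/\sqrt{\nu}$. By the Cauchy--Lipschitz theorem there is a unique maximal solution $g$; since the right-hand side is bounded by $1/\sqrt{\nu}$, no finite-time blow-up can occur, so $g$ is defined on all of $[0,+\infty)$ with $0<g'\leq 1/\sqrt{\nu}$. The upper bound on $g'$ gives the Lipschitz property, and positivity of $g'$ gives strict monotonicity, hence injectivity. The one point deserving genuine care is surjectivity: I would rule out a finite limit $g(t)\to L<+\infty$ by observing that continuity of $a$ would force $g'(t)\to 1/\sqrt{a(L)}>0$, which contradicts boundedness of $g$; thus $g(t)\to+\infty$ and $g([0,+\infty))=[0,+\infty)$. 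Bootstrapping $g'=1/\sqrt{a(g)}$ (the right-hand side is $C^1$ in $t$ once $g$ is known to be $C^1$) yields $g\in C^2$. For the inverse, the inverse function theorem gives $g^{-1}\in C^2$ with $(g^{-1})'(s)=\sqrt{a(s)}\geq\sqrt{\nu}>0$, from which the remaining stated properties follow; I would remark that global Lipschitzianity of $g^{-1}$ requires $a$ bounded, but this is immaterial since $g^{-1}$ is only applied to the bounded solution $u$.

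For the equivalence I would set $u=g(v)$ and compute by the chain rule $\nabla u=g'(v)\nabla v$ and $\Delta u=g''(v)|\nabla v|^2+g'(v)\Delta v$, using the identities $g'(v)=a(u)^{-1/2}$ and $g''(v)=-\tfrac{1}{2}a'(u)a(u)^{-2}$ obtained by differentiating the ODE. Substituting into the quasilinear operator, after first rewriting it as $-a(u)\Delta u-\tfrac{a'(u)}{2}|\nabla u|^2$, the two contributions proportional to $|\nabla v|^2$ cancel exactly and one is left with $-\sqrt{a(u)}\,\Delta v$. Hence the quasilinear equation becomes $-\sqrt{a(g(v))}\,\Delta v=f(g(v))$, that is $-\Delta v=(f/\sqrt{a})(g(v))=h(v)$, and the same computation read backwards gives the converse. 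This algebraic cancellation is the computational heart of the argument and the only step requiring real attention.

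Finally, since $g(0)=0$ and $g$ is strictly increasing, the boundary condition $u=0$ on $\partial\Omega$ is equivalent to $v=0$ on $\partial\Omega$, and $u>0$ is equivalent to $v>0$; and since both $g$ and $g^{-1}$ belong to $C^2$, composition preserves the regularity class $C(\overline{\Omega})\cap C^2(\Omega)$ in both directions, which completes the proof.
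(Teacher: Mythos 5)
Your proposal is correct and follows exactly the route the paper intends: the paper omits the proof, deferring to the change-of-variable arguments of \cite{GS13} and \cite{Ass23}, and your verification (global well-posedness and surjectivity of $g$ via the bounds $0<g'\le\nu^{-1/2}$, then the chain-rule computation with $g'(v)=a(u)^{-1/2}$, $g''(v)=-\tfrac12 a'(u)a(u)^{-2}$ producing the exact cancellation of the $|\nabla v|^2$ terms) is precisely that standard argument. Your side remark that global Lipschitzianity of $g^{-1}$ requires $a$ bounded, harmless here because $u\in L^\infty$, is a legitimate and correctly handled caveat to the statement as written.
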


Set now
$$M_h:=\inf \big\{ t>0 \mid h(t) \leq 0\big\}$$
it is easy to show that
$$M_h = g(M_f)$$
with the convention $g(+\infty)=+\infty$.

\smallskip

In order to exploit concavity results related to \eqref{eq_change_var}, we first rewrite Theorem \ref{thm_BMS} in the semilinear case with $f \in C^1$ (see \cite[Remark 1.5]{BMS22}).

\begin{lemma}\label{lem_corol_BMS}
Let $N\geq 1$, $\Omega \subset \R^N$ be a bounded, convex domain with $C^2$-boundary, and let $h\in C^{\sigma}_{\rm loc}([0,+\infty))\cap C^1((0,+\infty))$ for some $\sigma \in (0,1]$. 
Let $v \in C(\overline{\Omega}) \cap C^2(\Omega)$ 
be a classical solution of
$$\begin{cases}
-\Delta v = h(v) & \Omega,
\\
v>0 & \Omega,
\\
v=0 & \partial \Omega.
\end{cases}$$
 Assume moreover that 
\begin{itemize}
\item $\frac{H h'}{h^2}$ is non-increasing in $(0,M_h)$ with $\lim_{t \to 0^+} \frac{H h'}{h^2}\leq \frac{1}{2}$
\end{itemize}
where $M_h:=\inf \{ t>0 \mid h(t) \leq 0\}>0$, $H(t):= \int_0^t h(\tau) d\tau$ and $h >0$ on $(0,M_h)$. 
Then $\psi(v)$ is concave, where 
$$ \psi(t):= \int_1^t \frac{1}{\sqrt{H(s)}} ds, \quad t \in (0,+\infty).$$
\end{lemma}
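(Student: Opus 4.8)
The plan is to deduce the lemma directly from Theorem~\ref{thm_BMS} with $p=2$, applied to the source $h$ in place of $f$. Note first that at $p=2$ the concavizing function of Theorem~\ref{thm_BMS} is $\int_1^t F^{-1/2}(s)\,ds$, with $F$ the antiderivative of the source; since here the antiderivative of $h$ is $H$, this is exactly the $\psi$ in the statement. Hence it suffices to show that the single hypothesis on $\gamma:=\frac{Hh'}{h^2}$ is equivalent to conditions $i)$ and $ii)$ of Theorem~\ref{thm_BMS} (with $f$ replaced by $h$, and $F$ by $H$), after which the conclusion is immediate.

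To match condition $i)$, I would differentiate $\sqrt{H}$ twice on $(0,M_h)$. Using $H'=h$ one gets
\[
\big(\sqrt{H}\big)''=\frac{2h'H-h^2}{4H^{3/2}},
\]
so $\sqrt{H}$ is concave precisely when $2h'H\le h^2$, i.e. when $\gamma\le\frac{1}{2}$. To match condition $ii)$, a single differentiation, again via $H'=h$, gives
\[
\Big(\frac{H}{h}\Big)'=\frac{h^2-Hh'}{h^2}=1-\gamma,
\]
so $\frac{H}{h}$ is convex precisely when $1-\gamma$ is non-decreasing, that is, when $\gamma$ is non-increasing.

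With these two identities the hypothesis of the lemma closes the argument. Indeed, $\gamma$ non-increasing is exactly condition $ii)$; moreover, being non-increasing, $\gamma$ attains its supremum on $(0,M_h)$ as the one-sided limit $\lim_{t\to 0^+}\gamma(t)\le\frac{1}{2}$, whence $\gamma(t)\le\frac{1}{2}$ for all $t\in(0,M_h)$, which is condition $i)$. Since $h\in C^\sigma_{\rm loc}\cap C^1$ and $h>0$ on $(0,M_h)$ with $M_h>0$ are exactly the standing hypotheses of Theorem~\ref{thm_BMS}, that theorem applies and yields $\psi(v)$ concave. The computations are elementary and I expect no genuine analytic obstacle; the only point worth flagging is that the threshold $\frac{1}{2}$ in the endpoint condition is the sharp one produced by the exponent $\frac{1}{p}=\frac{1}{2}$ appearing in $\big(\sqrt{H}\big)''$ (for general $p$ the bound $\frac{1}{p}$ would take its place), so the real content of the lemma is the observation that monotonicity of $\gamma$ together with this single endpoint bound simultaneously encodes both the concavity of $\sqrt{H}$ and the convexity of $H/h$.
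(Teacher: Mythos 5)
Your proposal is correct and follows essentially the same route as the paper, which presents Lemma \ref{lem_corol_BMS} as a rewriting of Theorem \ref{thm_BMS} for $p=2$ and $f\in C^1$, delegating the computation to \cite[Remark 1.5]{BMS22}: the identities $\bigl(\sqrt{H}\bigr)''=\frac{2h'H-h^2}{4H^{3/2}}$ and $\bigl(\tfrac{H}{h}\bigr)'=1-\frac{Hh'}{h^2}$, together with the observation that a non-increasing $\gamma$ with $\lim_{t\to 0^+}\gamma(t)\le\frac{1}{2}$ satisfies $\gamma\le\frac{1}{2}$ throughout, are exactly the content of that reduction.
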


\begin{theorem} \label{thm_main_concave}
Let $N\geq 1$, $\Omega \subset \R^N$ be a bounded, convex domain with $C^2$-boundary, $a\in C^1([0,+\infty))$ 
 satisfying \eqref{eq_ellipticity}, and let $f\in C^{\sigma}_{\rm loc}([0,+\infty)) \cap C^1((0,+\infty))$ for some $\sigma \in (0,1]$. 
Let $u \in C(\overline{\Omega}) \cap C^2(\Omega)$ 
be a classical solution of \eqref{eq_main_sect}. Set
\begin{equation}\label{eq_def_gamma}
\gamma(t) := \frac{F(t) f'(t)}{f^2(t)} - \frac{1}{2} \frac{F(t) a'(t)}{ f(t) a(t)}, \quad t \in (0, M_f),
\end{equation}
where $M_f:=\inf \{ t>0 \mid f(t) \leq 0\}>0$ and $F(t):= \int_0^t f(\tau) d\tau.$
Assume moreover $f >0$ on $(0,M_f)$ and 
\begin{itemize}
\item $\gamma$ is non-increasing in $(0,M_f)$ with $\lim_{t \to 0^+}\gamma(t)\leq \frac{1}{2}$.
\end{itemize}
Then $\varphi(u)$ is concave, where 
\begin{equation}\label{eq_def_varphi}
 \varphi(t):= \int_{\mu}^t \sqrt{\frac{a(s)}{F(s)}} ds, \quad t \in (0,+\infty),
 \end{equation}
$\mu:= \nu^{-\frac{1}{2}}$. In particular, $u$ is quasiconcave.
\end{theorem}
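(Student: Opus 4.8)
The plan is to use the change of variables to reduce the quasilinear problem \eqref{eq_main_sect} to the semilinear one \eqref{eq_change_var}, and then to invoke Lemma \ref{lem_corol_BMS}. First I would let $g$ be the solution of the Cauchy problem \eqref{eq_Cauchy_prob} and set $v := g^{-1}(u)$: by the change-of-variables Proposition above, $v \in C(\overline{\Omega}) \cap C^2(\Omega)$ solves $-\Delta v = h(v)$ with $h = (f/\sqrt{a})\circ g$. Since $g$ is strictly increasing with $g((0,M_f)) = (0,M_h)$, and $f>0$, $a>0$ there, one gets $h>0$ on $(0,M_h)$, so that Lemma \ref{lem_corol_BMS} is applicable in principle.

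The core of the argument is to translate the relevant semilinear quantities back to the original data. Substituting $\tau = g(s)$ in $H(t)=\int_0^t h$ and using $g' = 1/\sqrt{a(g)}$, I would first obtain the identity
\begin{equation}\label{eq_plan_H}
H(t) = F(g(t)), \qquad t \in (0,M_h).
\end{equation}
Next, writing $\varphi(u) = \varphi(g(v))$ and performing the same substitution $s = g(r)$ inside the integral defining $\varphi$ in \eqref{eq_def_varphi}, the factor $\sqrt{a}$ cancels against $g' = 1/\sqrt{a(g)}$ and \eqref{eq_plan_H} yields
\begin{equation}\label{eq_plan_phi}
\varphi(g(v)) = \int_{g^{-1}(\mu)}^{v} \frac{1}{\sqrt{H(r)}}\,dr = \psi(v) + c,
\end{equation}
where $\psi(t)=\int_1^t H^{-1/2}(s)\,ds$ is the concavizing function of Lemma \ref{lem_corol_BMS} and $c$ is a constant. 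Hence $\varphi(u)$ and $\psi(v)$ differ by a constant, so the concavity of one is equivalent to that of the other.

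It then remains to match the hypotheses. Differentiating $h = (f/\sqrt{a})\circ g$ via the chain rule and using once more $g' = 1/\sqrt{a(g)}$, a direct computation would show
\begin{equation}\label{eq_plan_gamma}
\frac{H(t)\,h'(t)}{h(t)^2} = \gamma(g(t)), \qquad t \in (0,M_h),
\end{equation}
with $\gamma$ as in \eqref{eq_def_gamma}. Since $g$ is strictly increasing, $\gamma \circ g$ is non-increasing on $(0,M_h)$ if and only if $\gamma$ is non-increasing on $(0,M_f)$, and $\lim_{t\to 0^+}\gamma(g(t)) = \lim_{w\to 0^+}\gamma(w) \leq \tfrac12$; thus the assumption of the present theorem is exactly the assumption of Lemma \ref{lem_corol_BMS}. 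Applying the Lemma gives that $\psi(v)$ is concave, whence $\varphi(u)$ is concave by \eqref{eq_plan_phi}. Finally, since $\varphi'(t)=\sqrt{a(t)/F(t)}>0$, the function $\varphi$ is strictly increasing, so the superlevel sets of $\varphi(u)$ and of $u$ coincide and the concavity of $\varphi(u)$ forces them to be convex; that is, $u$ is quasiconcave.

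I would expect the main obstacle to be purely computational bookkeeping, concentrated in establishing \eqref{eq_plan_H}, \eqref{eq_plan_phi} and especially \eqref{eq_plan_gamma}: one must carry the substitution $\tau=g(s)$ through each integral and derivative consistently and verify that the $\sqrt{a}$-factors cancel as claimed. No genuinely new idea beyond the change of variables is needed, since the deep concavity estimate is already packaged in Lemma \ref{lem_corol_BMS}.
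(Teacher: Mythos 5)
Your proposal follows essentially the same route as the paper: the change of variables $v=g^{-1}(u)$, the identities $H=F\circ g$ and $\tfrac{Hh'}{h^2}=\gamma\circ g$, the transfer of the monotonicity and limit hypotheses through the increasing map $g$, the application of Lemma \ref{lem_corol_BMS}, and the identification $\varphi(u)=\psi(v)+\text{const}$ via $g'=1/\sqrt{a(g)}$. The only detail you gloss over is the verification that $h=(f/\sqrt{a})\circ g$ actually satisfies the regularity hypothesis of the Lemma, i.e.\ $h\in C^{\theta}_{\rm loc}([0,+\infty))\cap C^1((0,+\infty))$ with $\theta=\min\{\sigma,\tfrac12\}$ (the paper checks this explicitly using that $\sqrt{a}$ is $\tfrac12$-H\"older and $g$ is Lipschitz); this is routine but should be stated.
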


\begin{proof}
Let us consider the modified problem \eqref{eq_change_var}. First of all, we observe that $h$ is locally Hölder continuous: indeed, for any $K \subset \subset [0,+\infty)$ and any $t, s \in K$, set for simplicity $x:=g(t)$, $y:=g(s)$ and $\theta:= \min\{\sigma, \frac{1}{2}\}$, we have 
\begin{align*}
|h(t)-h(s)| &\leq \frac{ |f(x)\sqrt{a(y)} - f(y) \sqrt{a(x)}|}{|\sqrt{a(x) a(y)}|} \leq \frac{1}{\nu} \left( \sqrt{a(y)} |f(x)-f(y)| + |f(y)| |\sqrt{a(x)}-\sqrt{a(y)}|\right) \\
& \lesssim \left( \norm{\sqrt{a\circ g}}_{L^{\infty}(K)} + \norm{f\circ g}_{L^{\infty}(K)} \right) |g(t)-g(s)|^{\theta} \leq C(a,f,g,K) |t-s|^{\theta},
\end{align*}
where we have used that $a\in C^1([0,+\infty))$ (and thus $\sqrt{a}$ is $\frac{1}{2}$-Hölderian) and $g$ is Lipschitz. Hence $h$ is locally $\theta$-Hölder continuous; notice that, far from $t=0$, we have $h \in C^1$, thus here $h$ is locally Lipschitz continuous. 

Then, we define the antiderivative of $h$
$$H(t):= \int_0^t h(\tau) d \tau = \int_0^t \left(\frac{f}{\sqrt{a}}\right)(g(\tau)) d\tau.$$
By a change of variable, and exploiting the ODE \eqref{eq_Cauchy_prob} we obtain
$$H= F \circ g.$$
Thus, by looking at Lemma \ref{lem_corol_BMS} we evaluate $\frac{H h'}{h^2}$, which again by \eqref{eq_Cauchy_prob} gives
$$ \frac{H h'}{h^2} = \gamma \circ g.$$
In particular, being $g$ increasing and continuous with $g(0)=0$ we obtain
$$ \frac{H h'}{h^2} \hbox{ is non-increasing} \iff \gamma \hbox{ is non-increasing},$$
$$\lim_{t \to 0^+} \left(\frac{H h'}{h^2}\right)(t) \equiv \lim_{t \to 0^+} \gamma(t).$$
Applying Lemma \ref{lem_corol_BMS} we gain that $\psi(v)$ is concave, where $v=g^{-1}(u)$ and
$$ \psi(t)= \int_1^t \frac{1}{\sqrt{H(s)}} ds = \int_1^t \frac{1}{\sqrt{F(g(s))}} ds = \int_{g(1)}^{g(t)} \sqrt{\frac{a(s)}{F(s)}} ds = \varphi(g(t)) + \int_{g(1)}^{\mu} \sqrt{\frac{a(s)}{F(s)}} ds$$
where we used again a change of variable and \eqref{eq_Cauchy_prob}; we observe $g(1) \leq \mu$ since
$$g(1) = \int_0^1 g'(\tau) d\tau = \int_0^1 \frac{1}{\sqrt{a(g(\tau))}} d \tau \leq \nu^{-\frac{1}{2}}=\mu.$$
Set $C:=\int_{g(1)}^{\mu} \sqrt{\frac{a(s)}{F(s)}} ds$ we get that the function
$$\varphi(u)=\varphi(g(v))-C=\psi(v)-C$$
is concave, which is the claim.
\end{proof}


We rephrase Theorem \ref{thm_main_concave} in a less general framework, but with assumptions on $f$ and $a$ more strictly related to the original ones of Theorem \ref{thm_BMS}; namely, we prove Theorem \ref{thm_corol_main_concave}.

\begin{proof}[Proof of Theorem \ref{thm_corol_main_concave}]
Observed that, by \eqref{eq_def_gamma}, 
$$\gamma(t) = \frac{F(t) f'(t)}{f^2(t)} - \frac{1}{2}\xi(t),$$
and that $i)$ and $ii)$ imply that $\frac{F f'}{f^2}$ is non-increasing in $(0,M_f)$ with $\lim_{t \to 0^+} \frac{F f'}{f^2}\leq \frac{1}{2}$, we achieve the claim by the assumption $iii)$ and Theorem \ref{thm_main_concave}.
\end{proof}

\begin{remark}
We observe that the conditions stated in Theorem \ref{thm_main_concave} and Theorem \ref{thm_corol_main_concave} are scaling invariant, that is, they do not change by substituting $f$ and $a$ with $\lambda f$ and $\mu a$, where $\lambda, \mu \in (0,+\infty)$. 
Moreover, the function $\varphi$ does not depend on the particular domain $\Omega$; it could be interesting to investigate the existence of a natural function $\varphi$ modeled also on the shape (in particular, the curvature) of the domain.
\end{remark}


\section{Examples and remarks}
\label{sec_ex_rem}

We start by observing that, under mild assumptions on $f$ and $\Omega$, solutions of PDEs are never concave.

\begin{remark}\label{rem_never_concav}
Let $f:[0,+\infty) \to [0,+\infty)$ with $f(0)= 0$, and consider the problem 
$$\begin{cases}
-\Delta u = f(u) & \Omega,
\\
u>0 & \Omega,
\\
u=0 & \partial \Omega,
\end{cases}$$
and a classical solution $u \in C^2 (\overline{\Omega})$ (e.g. one may assume $\partial \Omega \in C^{2,\sigma}$ and $f \in C^{\sigma}(\R)$ for some $\sigma \in (0,1]$). Let $P \in \partial \Omega$ and assume $\partial \Omega$ parametrized, near $P$ (say, in a neighborhood $U$), by a $C^2$-curve $x_N=\phi(\tilde{x})$, $\tilde{x}=(x_1, \dots x_{N-1})$, with 
\begin{equation}\label{eq_contex_Korev}
\nabla \phi(\tilde{P})=0, \quad \Delta \phi(\tilde{P})>0;
\end{equation}
in particular the last condition means that the mean curvature of $\partial \Omega$ in $P$ is strictly positive\footnote{Indeed, $\kappa(P) = \frac{1}{2} \dive \left ( \frac{\nabla \phi}{\sqrt{1+|\nabla \phi|^2}}\right)(\tilde{P}) = \frac{1}{2} \Delta \phi(\tilde{P})$ when $\nabla \phi (\tilde{P}) =0$. Notice that, if $\Omega \subset \R^N$ is smooth, convex and bounded, the existence of such a point is always ensured, for example by the Minkowski inequality $\int_{\partial \Omega} \kappa d\sigma \gtrsim |\partial \Omega|^{\frac{N-2}{N-1}}$ (when $N\geq 3$), or by a comparison argument with an external shrinking ball.}.
 Here we have fixed the axis coherent with the local parametrization, observing that the equation is translation and rotation invariant.
Set
$$\Phi(\tilde{x}):=u(\tilde{x}, \phi(\tilde{x})) \equiv 0 $$
for every $(\tilde{x}, \phi(\tilde{x})) \in \partial \Omega \cap U$. Thus we have
$$0 \equiv \Phi_{ii}(\tilde{x}) = u_i(\tilde{x}, \phi(\tilde{x})) + 2 u_{iN}(\tilde{x}, \phi(\tilde{x})) \phi_i(\tilde{x}) + u_{NN}(\tilde{x}, \phi(\tilde{x})) \phi_i^2(\tilde{x}) + u_N(\tilde{x}, \phi(\tilde{x})) \phi_{ii}(\tilde{x})$$
for $i=1, \dots, N-1$. In particular, by \eqref{eq_contex_Korev}
$$0 = u_i(P) + u_N(P) \phi_{ii}(\tilde{P}), \quad i= 1, \dots, N-1$$
and hence, observed that $\Delta u(P) = -f(u(P))=-f(0)=0$, 
$$0 = -u_{NN}(P) + u_N(P) \Delta \phi(\tilde{P}).$$
Since $f \geq 0$, by the Hopf's Lemma we have $ u_N(P) > 0$, thus, by \eqref{eq_contex_Korev}, 
$$u_{NN}(P) = u_N(P) \Delta \phi(\tilde{P}) > 0.$$
This automatically rules out the concavity of $u$, since if $u$ were concave then the second derivative in every direction would be nonpositive.

This remark in particular applies to the eigenvalue problem \cite{Kor83Co}. 
The condition $f(0)=0$ cannot be removed, as shown by the torsion problem where concave solutions may exist \cite{HNST18, Kos87}: for example, if $u$ is a solution of the torsion problem in $\Omega$ with a strict maximum $u(\bar{x})=:c$, then $v:= u - (c-\eps)$ is a concave solution in the set $\Omega_c:=u^{-1}( (c-\eps, c])$ with $\eps$ sufficiently small.

When $\Omega$ has only points with flat curvature (whenever defined), we can still show that solutions of the first eigenvalue problem are not concave. For example, when $N=2$ and $\Omega=(0,\pi) \times (0,\pi)$, it is a straightforward computation seeing that the solution $u(x,y)=\sin(x) \sin(y)$ has nonpositive eigenvalues only in the $1$-norm ball $B_{\frac{\pi}{2}}(\frac{\pi}{2},\frac{\pi}{2})$, i.e. far from the corners; when $N\geq 3$ and $\Omega$ is a cube, by separation of variables we can apply the previous argument on cross sections. This non-concavity extends to (planar) first eigenfunctions of arbitrary convex polygons since, near a corner with amplitude $\frac{\pi}{b}$, we have by some barrier argument (see also \cite[Remark 1.6]{Ind22}), in radial coordinates centered in the corner, 
$$u(\rho, \phi) = C_b \rho^b \sin(b \phi) + o(\rho^b)$$
for some $C_b>0$, thus (being $b>1$) $u$ cannot be concave. 

\end{remark}

We show now the applicability of Theorem \ref{thm_corol_main_concave} to some particular cases.

\begin{figure}
\begin{center}
\includegraphics[scale=0.38]{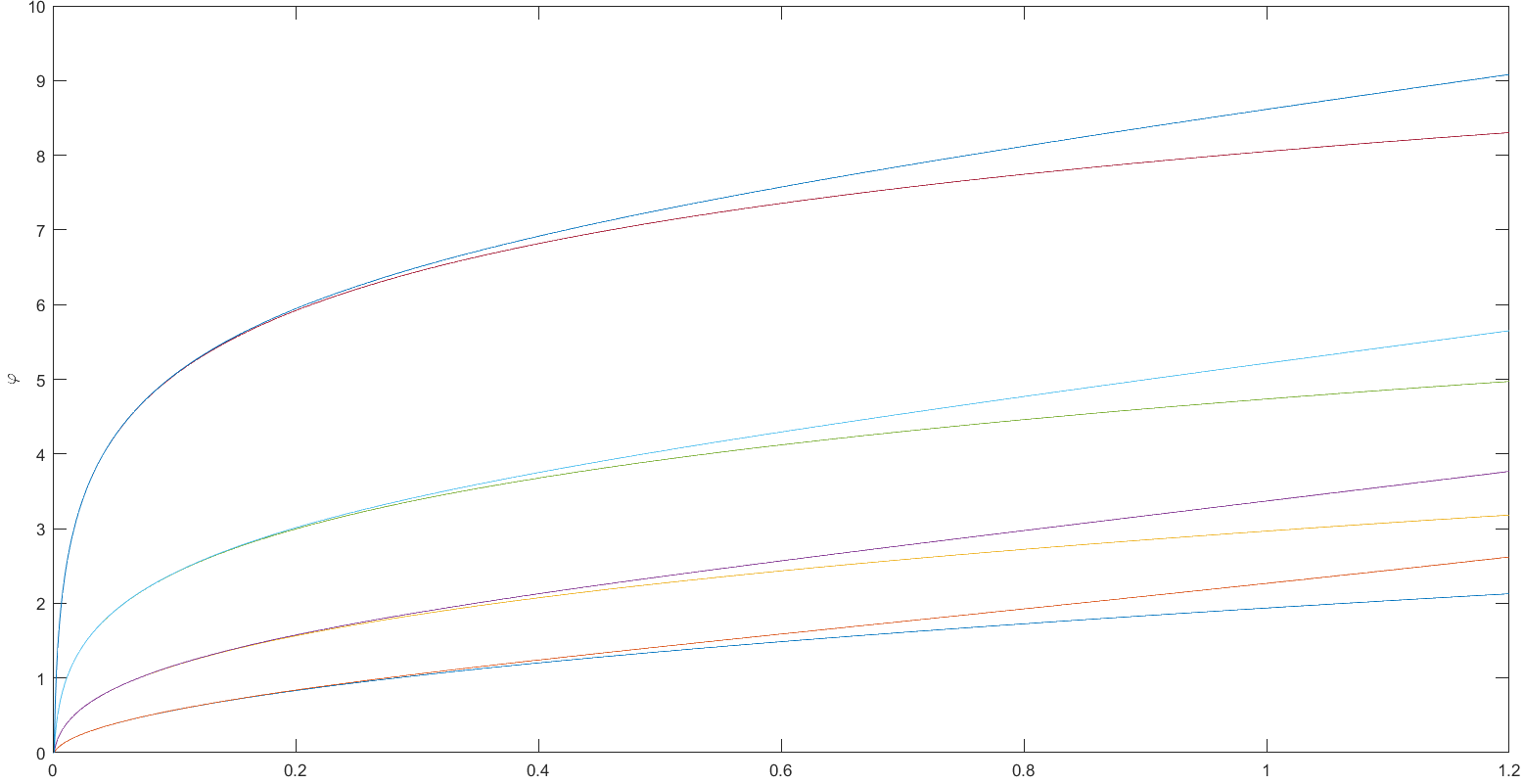}
\end{center}
\small{\caption{
Numerical computations of $\varphi$, comparing the cases $a(t)\equiv 1$ and $a(t)=1+2t^2$, for different values of $f(t)=t^q$: $q=0,\, 0.3,\, 0.6,\,0.9$. It is evident that the behavior at the origin is dictated only by the source $f$.
\label{fig_graf_a}
}}
\end{figure}

\begin{example}\label{ex_fisico_power}
Consider (see \eqref{eq_intr_MNLS}) 
$$a(t)=1+2t^2, \quad f(t)=t^q;$$
in this case $F(t)= \frac{t^{q+1}}{q+1} $, $M_f=+\infty$ and conditions $i)$-$ii)$ are fulfilled whenever $t^{\frac{q+1}{2}}$ is concave, i.e. $q \in [0,1]$. As regards $iii)$ instead, we have (up to multiplicative constants)
$$\xi(t)= \frac{t^2}{1+2t^2}$$
which is positive 
and increasing on $(0,+\infty)$, whatever $q$ is. Thus, set (notice $\nu=1$)
$$\varphi(t):= \int_1^t \sqrt{\frac{1+2s^2}{s^{q+1}}} ds$$
we have $\varphi(u)$ concave, for every classical solution of 
$$\begin{cases}
-\Delta u - u \Delta(u^2)=u^q & \Omega,
\\
u>0 & \Omega,
\\
u=0 & \partial \Omega.
\end{cases}$$
Notice that
$$\varphi(t) \sim \int_1^t s^{-\frac{q+1}{2}} ds \sim t^{\frac{1-q}{2}} - 1 \quad \hbox{as $t\to 0$} $$
(additive and multiplicative constants play no role in concavity arguments). We may wonder if, when $q<1$, we can apply results in \cite{BS13} 
(see also \cite{Kor83Ca, Kor83Co, Ken85}) 
to get the concavity of $u^{\frac{1-q}{2}}$: to achieve this, we should have (see \eqref{eq_int_old_result} with $a$, $f$ as above and $\gamma=\frac{1-q}{2}$)
$$t \in [0,+\infty) \mapsto \frac{t(\beta t^{\frac{1}{1-q}} +1)}{1+2 t^{\frac{1}{1-q}}}$$
concave for every $\beta \geq 0$. A straightforward computation shows that, set $\theta:=\frac{1}{1-q}>1$, the second derivative is given by
$$ t \in [0,+\infty) \mapsto (2-\beta) \frac{2\theta (\theta-1)t^{\theta-1}}{(2t^{\theta}+1)^3} \left(t^{\theta}- \frac{\theta+1}{2(\theta-1)}\right) $$
which actually does not fulfill the request. This argument shows that, maybe, power functions are not the right choice to deal with quasilinear equations of the type \eqref{eq_main_intr}, while a suitable $\varphi$ as the above one gives a natural example of function which makes a solution concave.
This seems to highlight a difference between the quasilinear MNLS case and the quasilinear $p$-Laplacian case, where suitable powers of solutions are allowed to be concave \cite{Sak87}.
%
\end{example}

%




\begin{example}
We choose
$$a(t)=1+2 t^2, \quad f(t)=1-t.$$
In this case $M_f = 1$ and
 $F(t)= t- \frac{t^2}{2}$. 
 Straightforward computations show the applicability of Theorem \ref{thm_corol_main_concave}.
 %
\end{example}

\begin{example}
Let $\lambda \in [0,1)$ and (see \eqref{eq_intr_relSch})
$$a(t) = 1 \pm \frac{t^2}{2(1+t^2)}, \quad f(t) = -\lambda t + \frac{t}{\sqrt{1+t^2}} .$$
Here $M_f = \frac{\sqrt{1-\lambda^2}}{\lambda} \in (0, +\infty]$ and $F(t)= - \lambda \frac{t^2}{2} + \sqrt{1+t^2}-1$. Numerical computation suggest that $\gamma$ defined in \eqref{eq_def_gamma} equals $\frac{1}{2}$ in $t=0$ and then decreases. 
In particular, solutions to
$$\begin{cases}
-\Delta u + \left( \lambda + \frac{1 \mp \frac{1}{2} \Delta \sqrt{1+u^2}}{\sqrt{1+u^2}}\right) u = 0 & \Omega,
\\
u>0 & \Omega,
\\
u=0 & \partial \Omega,
\end{cases}$$
are such that $\varphi(u)$ is concave, where $\varphi$ is given by \eqref{eq_def_varphi} (and $\mu= \sqrt{2}$).
\end{example}

%

\begin{remark}
\label{rem_general_explod}
We highlight that one can relax the assumptions of Theorem \ref{thm_corol_main_concave} to $a \in C^{\omega}_{\rm loc}([0,+\infty)) \cap C^1((0,+\infty))$ for some $\omega \in (0,1]$; in this case $h$, defined in \eqref{eq_change_var}, verifies $h \in C^{\theta}_{\rm loc}([0,+\infty))$ with $\theta:= \min\{\sigma, \frac{\omega}{2}\}$.

Moreover, we observe that Theorem \ref{thm_corol_main_concave} can be proved also in the case 
$$a \in C^1([0, \beta)), \quad \lim_{t \to \beta^{-}} a(t) = +\infty,$$
and $f \in C^{\sigma}_{\rm loc}([0,\beta))\cap C^1((0,\beta))$ for some $\sigma \in (0,1]$ and $\beta \in (0,+\infty)$. 
Some slight changes in the proofs occur.
\begin{itemize}
\item The transformation $g:[0,+\infty) \to \R$ given by \eqref{eq_Cauchy_prob}, still regular and strictly increasing, verifies $g(t) \to \beta$ as $t \to +\infty$. In particular, $g^{-1}: [0, \beta) \to [0,+\infty)$. Here we exploit that $a$ explodes in $\beta$.
\item The classical solutions of the Dirichlet boundary problem \eqref{eq_main_sect} satisfy $0<u(x)<\beta$, $x \in \Omega$. This can be seen for example by writing
$$-\Delta u = \frac{a'(u)}{2 a(u)} |\nabla u|^2 + \frac{f(u)}{a(u)};$$
if there exists an $x \in \Omega$ such that $u(x)=\beta$, then -- observed that\footnote{Otherwise, $\log(a(t))$ would be controlled by some $C t$, and thus bounded on compact sets.} 
$\pabs{\frac{a'(t)}{a(t)}}$ cannot be bounded near $t=\beta$ --
we get a contradiction with the definition of classical solution. 
In particular, being continuous on $\overline{\Omega}$, we have that each solution $u$ is far from $\beta$, i.e. $\norm{u}_{\infty} < \beta$, and we can work with $f \in C^1((0, \norm{u}_{\infty}])$.

Notice instead that solutions to \eqref{eq_change_var} are not subject to any restriction, since $a \circ g$ is defined on the whole $\R$, and so it is the source $h= \frac{f}{\sqrt{a}} \circ g$.
\end{itemize}
The remaining part of the proof follows the lines of the original one.
\end{remark}

\begin{example}
Thanks to Remark \ref{rem_general_explod} (setting $\beta=1$) we can consider for $\lambda \in [0,1)$ (see \eqref{eq_intr_Heisenb}),
$$a(t) = 1 + \frac{t^2}{2(1-t^2)}, \quad f(t)= t - \lambda \frac{t}{\sqrt{1-t^2}};$$
here $M_f = \sqrt{1-\lambda^2} \in [0,1)$, $F(t) = \frac{t^2}{2} + \lambda ( \sqrt{1-t^2}-1)$. Again, numerical computation shows that $\gamma(0)=\frac{1}{2}$ and $\gamma$ decreasing in $(0,M_f)$, 
where $\gamma$ is defined in \eqref{eq_def_gamma}. Thus, again, $\varphi(u)$ is concave, where $\varphi$ is given by \eqref{eq_def_varphi} ($\mu=1$) and $u$ satisfies
$$\begin{cases}
-\Delta u + \frac{1}{2} u\frac{\Delta\sqrt{1-u^2}}{\sqrt{1-u^2}} = u - \lambda \frac{u}{\sqrt{1-u^2}}& \Omega,
\\
u>0 & \Omega,
\\
u=0 & \partial \Omega.
\end{cases}$$
The case $\lambda=0$ can be easily (and explicitly) generalized also to powers $f(t)=t^q$, with $q \in [0,1]$.
\end{example}

\begin{remark}
We remark that, without assuming $f\in C^1$, it is not straightforward to exploit the ``size of the convexity'' of the involved functions and reach the desired claim of Theorem \ref{thm_corol_main_concave}.
 Indeed, consider the notations of Theorem \ref{thm_main_concave}: to conclude that $\sqrt{H} = \sqrt{F}\circ g$ is concave, one can assume $\sqrt{F}$ concave and $a$ non-decreasing (i.e. the physical case, which implies $g$ concave), being $\sqrt{H}$ composition of a concave non-decreasing function and a concave function. On the other hand, to deduce $\frac{H}{h} = \left( \frac{F}{f} \sqrt{a} \right) \circ g$ convex, we need $\frac{F}{f} \sqrt{a}$ (convex and) non-increasing: but this last condition is generally not satisfied. 
The roughness of this result is due the fact that in this discussion we are not exploiting the quantitative information which relates the transformation $g'$ to $a$.
\end{remark}

\bigskip

\end{document}